\definecolor{mygreen}{cmyk}{0.64,0.00,0.95,0.40}
\newcommand{\dual}[1]{\langle{#1}\rangle}
\title{Hybridized Discontinuous Galerkin Method for Elliptic Interface Problems}
\author{
Masaru \textsc{Miyashita}%
\thanks{Technology Research Center, Sumitomo Heavy Industries,
 Ltd., Natsushima 19, Yokosuka, Kanagawa 237-8555, Japan.
 \textit{E-mail}: \texttt{masaru.miyashita.z@shi-g.com}}
\and 
Norikazu \textsc{Saito}%
\thanks{Graduate School of Mathematical Sciences, The University of Tokyo,
Komaba 3-8-1, Meguro-ku, Tokyo 153-8914, Japan.
\textit{E-mail}: \texttt{norikazu@ms.u-tokyo.ac.jp}}
}
\begin{document}

\theoremstyle{plain}
\newtheorem{thm}{Theorem}
\newtheorem{prop}[thm]{Proposition}
\newtheorem{cor}[thm]{Corollary}
\newtheorem{lemma}[thm]{Lemma}
\theoremstyle{remark}
\newtheorem{defi}[thm]{Definition}
\newtheorem{remark}[thm]{Remark}
\newtheorem{assum}[thm]{Assumption}
\newtheorem{ex}[thm]{Example}


\maketitle

\begin{abstract}
New hybridized discontinuous Galerkin (HDG) methods for the interface
 problem for elliptic equations are proposed. Unknown functions of our
 schemes are $u_h$ in elements and $\hat{u}_h$ on inter-element
 edges. That is, we formulate our schemes without introducing the flux
 variable. Our schemes naturally satisfy the Galerkin orthogonality.  
The solution $u$ of the interface problem under consideration may not have a
 sufficient regularity, say $u|_{\Omega_1}\in H^2(\Omega_1)$ and   
$u|_{\Omega_2}\in H^2(\Omega_2)$, where $\Omega_1$ and $\Omega_2$ are
 subdomains of the whole domain $\Omega$ and
 $\Gamma=\partial\Omega_1\cap\partial\Omega_2$ implies the interface. We study the convergence, assuming $u|_{\Omega_1}\in H^{1+s}(\Omega_1)$ and   
$u|_{\Omega_2}\in H^{1+s}(\Omega_2)$ for some $s\in (1/2,1]$, where
 $H^{1+s}$ denotes the fractional order Sobolev space. Consequently, we
 succeed in deriving optimal order error estimates in an HDG norm and
 the $L^2$ norm. Numerical examples to validate our results are
 also presented. 
\end{abstract}

{\noindent \textbf{Key words:}
discontinuous Galerkin method, 
elliptic interface problem
}

\bigskip

{\noindent \textbf{2010 Mathematics Subject Classification:}}
65N30, 
65N15, 
35J25  

\section{Introduction}
\label{sec:1}

Let $\Omega$ be a bounded domain in $\mathbb{R}^d$, $d=2,3$,
with the boundary $\partial\Omega$. We suppose
that $\Omega$ is divided into two disjoint subdomains $\Omega_1$ and
$\Omega_2$. Then, $\Gamma=\partial\Omega_1\cap\partial\Omega_2$ implies the
\emph{interface}. See Fig. \ref{f:1} for example.

\begin{figure}[htb]
     \begin{minipage}{.49\textwidth}
	 \begin{center}
	  \includegraphics[width=.8\textwidth]{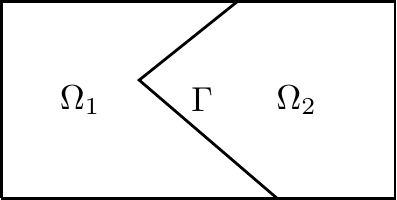}\\
	  Case (I) $\partial\Omega\cap\Gamma\ne\emptyset$.
	 \end{center}
    \end{minipage}
   \begin{minipage}{.49\textwidth}
	\begin{center}
	 \includegraphics[width=.8\textwidth]{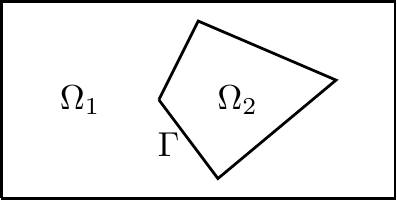}\\
	 Case (II) $\partial\Omega\cap\Gamma=\emptyset$.
	\end{center}
   \end{minipage}
   \caption{Examples of $\Omega_1$, $\Omega_2$ and $\Gamma$.}
\label{f:1}
\end{figure}
  
Suppose that we are given a matrix-valued function $A=A(x)$ of $\Omega\to \mathbb{R}^{d\times
d}$ such that: 
  \begin{align*}
&\mbox{(smoothness)} && A|_{\Omega_i} \mbox{ is a $C^1$ function in }\Omega_i && (i=1,2);\\
&\mbox{(symmetry)} && \xi\cdot A(x)\eta=(A(x)\xi)\cdot \eta, && (\xi,\eta\in\mathbb{R}^d,x\in\Omega);\\
&\mbox{(elliptic condition)} && \lambda_{\min}|\xi|^2\le
   \xi\cdot A(x)\xi\le \lambda_{\max} |\xi|^2&& (\xi\in\mathbb{R}^d,x\in\Omega)
  \end{align*}
with some positive constants $\lambda_{\min}$ and
$\lambda_{\max}$. Hereinafter, $|\cdot|=|\cdot|_{\mathbb{R}^d}$ denotes
the Euclidean norm in $\mathbb{R}^d$ and $\xi\cdot\eta$ the standard
scalar product in $\mathbb{R}^d$. 

We consider the following interface problem for second-order elliptic
equations for the function $u=u(x)$, $x\in\overline{\Omega}$, 
\begin{subequations} 
\label{eq:1}
  \begin{align}
 -\nabla \cdot A\nabla u&=f &&\mbox{in }\Omega\backslash\Gamma,\label{eq:1a}\\
   u&=0
   &&\mbox{on }\partial\Omega,\label{eq:1b}\\
   u|_{\Omega_1}-u|_{\Omega_2} &=g_D  && \mbox{on }\Gamma,\label{eq:1c}\\
   (A\nabla u)|_{\Omega_1}\cdot n_1+(A\nabla u)|_{\Omega_2}\cdot
   n_2&=g_N  && \mbox{on }\Gamma,\label{eq:1d}
  \end{align}
\end{subequations}
where $f,g_D,g_N$ are given functions, and $n_1,n_2$ are the unit normal
vectors to $\Gamma$ outgoing from $\Omega_1,\Omega_2$,  respectively. Moreover, $u|_{\Omega_1}$ stands for
the restriction of $u$ to $\Omega_1$ for example. 
We note that the gradient $\nabla u$ of the solution
may be discontinuous  across $\Gamma$, since $A$ may be discontinuous, 
even if $g_D=0$ and $g_N=0$. 

Elliptic interface problem of the form \eqref{eq:1} arises in many
fields of applications such as fluid dynamics and solid mechanics.
%
%
For instance, the first author has proposed \eqref{eq:1} as a
 convenient model for computing
 sheath voltage wave
 form in the radio frequency plasma source within reasonable
 computational time (see \cite{miy16}). The model involves the interface where the
 electronic potential and flux have nontrivial gaps; see also \cite{gk97}. 
%
The case $g_D=0$, which is sometimes referred to as elliptic problem
with discontinuous (diffusion) coefficients, is formulated as the
standard elliptic variational problem in $H^1(\Omega)$ and numerical
methods are studied by many authors; see \cite{cz98,bv00,pet02,cyz11}
for instance. On the other hand, the case
$g_D\ne 0$ has further difficulties and a lot of numerical methods have
been proposed (see \cite{bk96,mas12,mwwyz13} for example).

The present paper has dual purpose. 
The first one is to propose new schemes for solving
\eqref{eq:1} based on
the \emph{hybridized discontinuous Galerkin} (HDG) method. The HDG method is a
class of the discontinuous Galerkin (DG) method that is proposed by
Cockburn et al. (see \cite{cgl09}; see also \cite{ka72a,ka72b,ok10} for
other pioneering works). In the HDG method, we
introduce a new unknown function $\hat{u}_h$ on inter-element edges in
addition to the usual unknown function $u_h$ in elements. We can eliminate
$u_h$ from the resulting linear system and obtain the system only for $\hat{u}_h$; consequently, the size of the system
becomes smaller than that of the DG method. In this paper, we present
another advantage of the HDG method. That is, elliptic interface problem
\eqref{eq:1} is readily discretized by the HDG method and the
resulting schemes \eqref{eq:10} and \eqref{eq:100} described below naturally satisfy the consistency (see Lemma
\ref{la:3}) together with the Galerkin orthogonality (see
\eqref{eq:28}). It should be kept in mind that
Huynh et al. \cite{hnpk13} proposed an HDG scheme for \eqref{eq:1}. They
introduced further unknown function $q=A\nabla u$ and rewrote
\eqref{eq:1} into the system for $(u,q,\hat{u})$ based on the idea of
\cite{cgl09}, while our unknowns are only $(u,\hat{u})$ by following the
idea of \cite{ok10,oik10}. Herein, $\hat{u}$ denotes the trace of $u$
into inter-element edges. Moreover, results of numerical experiments
were well discussed and no theoretical consideration was undertaken in \cite{hnpk13}. 

The second purpose of this paper is to establish error estimates for the
HDG method when a sufficient regularity of solution, say $u\in
H^2(\Omega)$, could not be assumed. Actually, if $g_D\ne 0$,
the solution cannot be continuous across $\Gamma$. Moreover, we do not
always have partial regularities $u|_{\Omega_i}\in H^2(\Omega_i)$,
$i=1,2$. As a matter of fact, if
$\partial\Omega\cap\Gamma\ne\emptyset$, then we know that $u|_{\Omega_i}$ may
not belong to $H^2(\Omega_i)$, even when $\Gamma$ and $\partial\Omega$
are sufficiently smooth; see Remark \ref{rem:reg1}. 
To surmount of this obstacle, we employ the fractional order Sobolev
space $H^{1+s}(\Omega_i)$, $s\in (1/2,1]$, $i=1,2$, and are going to
attempt to derive an error estimate in an HDG norm $\|\cdot\|_{1+s,h}$
defined in terms of the $H^{1+s}(\Omega_i)$-seminorms (see \eqref{eq:31}). One of our final error estimate reads
(see Theorem \ref{th:2})
 \begin{equation*}
   \|\bm{u}-\bm{u}_h\|_{1+s,h}\le
Ch^s\left(\|u\|_{H^{1+s}(\Omega_1)}+\|u\|_{H^{1+s}(\Omega_2)}\right),
 \end{equation*}
where $\bm{u}=(u,\hat{u})$ and $\bm{u}_h=(u_h,\hat{u}_h)$. 
Moreover, we also derive (see
Theorem \ref{th:3})
  \begin{equation*}
   \|u-u_h\|_{L^2(\Omega)}\le
Ch^{2s}\left(\|u\|_{H^{1+s}(\Omega_1)}+\|u\|_{H^{1+s}(\Omega_2)}\right),
  \end{equation*}
following the Aubin--Nitsche duality argument. To derive those
inequalities, we improve the standard boundness inequality for the
bilinear form (see Lemma \ref{la:7a}) and inverse inequality (see Lemma \ref{la:11a}) to fit our purpose. We note that those results are
actually optimal order estimates, since we assume only  
$u|_{\Omega_1}\in H^{1+s}(\Omega_1)$ and   
$u|_{\Omega_2}\in H^{1+s}(\Omega_2)$. 

In this paper, we concentrate our consideration on the case where
$\Omega_1$ and $\Omega_2$ are polyhedral domains in order to avoid unessential
complications about approximation of smooth surfaces/curves. The case of
a smooth $\Gamma$ is of great interest; we postpone it for future study. On the
other hand, we only consider the case $\partial\Omega\cap
\Gamma\ne\emptyset$, since the modification to the case
$\partial\Omega\cap\Gamma=\emptyset$ is readily and straightforward. 

\medskip

This paper is composed of five sections with an appendix. In Section \ref{sec:2}, we recall
the variational formulation of \eqref{eq:1} and state our HDG
schemes. The consistency is also proved there. The well-posedness of the
schemes is verified in Section \ref{sec:3}. Section \ref{sec:4} is
devoted to error analysis using the fractional order Sobolev
space. Finally, we conclude this paper by reporting numerical
examples to confirm our error estimates in Section \ref{sec:9}. In the
appendix, we state the proof of a modification of inverse inequality
(Lemma \ref{la:11a}).   
  

\section{Variational formulation and HDG schemes}
\label{sec:2}

For the geometry of $\Omega\subset\mathbb{R}^d$, $d=2,3$, we assume the following: 
\begin{equation}
 \tag{H1}
\mbox{$\Omega,~\Omega_1,\Omega_2$ are all polyhedral domains and
$\partial\Omega\cap \Gamma\ne\emptyset$}.  
\end{equation}
That is, we consider only Case (I) in Fig. \ref{f:1}. 

To state a variational formulation, we need several function spaces.
Namely, we use 
$L^2(\Omega)$, $H^m(\Omega)$, $m$ being a positive integer,
$H^1_0(\Omega)$, $L^2(\Gamma)$, 
$H^{1/2}(\Gamma)$, $H^{3/2}_0(\Gamma)$ and so on.
We follow the notation of \cite{lm72} for those Lebesgue and Sobolev
spaces and their norms. 
The standard seminorm of $H^m(\Omega)$ is denoted by
$|v|_{H^m(\Omega)}$.  
Supposing that $S$ is a part of $\partial\Omega$ or $\Gamma$, we let $\gamma(\Omega,{S})$ be the trace
operator from $H^1(\Omega)$ into $L^2(S)$. Set
\[
 H_{\Gamma}^1(\Omega_i)=\{v\in H^1(\Omega_i)\mid
\gamma(\Omega_i,\partial\Omega\cap\partial\Omega_i)v=0\},\quad i=1,2.
\]
Further set $\gamma_i=\gamma(\Omega_i,\Gamma)$, $i=1,2$. 
We introduce
\begin{equation*}
V=\{v\in L^2(\Omega)\mid v|_{\Omega_i}\in H^1_{\Gamma}(\Omega_i),\
i=1,2\}
\end{equation*}
and write $v_i=v|_{\Omega_i}$, $i=1,2$, for $v\in V$. 

Variational formulation of \eqref{eq:1} is given as follows: Find
$u\in V$ such that
\begin{subequations} 
\label{eq:2}
   \begin{gather}
    \gamma_1u_1 - \gamma_2u_2=g_D\quad \mbox{on }\Gamma, \label{eq:2a}\\
a(u,v)=\int_\Omega fv~dx+\int_\Gamma g_Nv~dS\qquad (\forall v\in H^1_0(\Omega)),\label{eq:2b}
   \end{gather}
where 
\begin{equation}
\label{eq:forma}
a(u,v)=\int_{\Omega_1}A\nabla u_1\cdot \nabla v_1~dx+
\int_{\Omega_2}A\nabla u_2\cdot \nabla v_2~dx.
\end{equation}
\end{subequations}

To state the well-posedness of Problem \eqref{eq:2}, we have to recall the so-called {Lions-Magenes space} (see \cite[\S 1.11.5]{lm72}) 
\[
 {H}^{1/2}_{00}(\Gamma)=\{\mu\in H^{1/2}(\Gamma)\mid
 \varrho^{-1/2}\mu\in L^2(\Gamma)\}
\]
which is a Hilbert space equipped with the norm $\|\mu\|_{{H}^{1/2}_{00}(\Gamma)}^2=\|\mu\|_{H^{1/2}(\Gamma)}^2+\|\varrho^{-1/2}\mu\|_{L^2(\Gamma)}^2$. 
Herein, $\varrho\in C^\infty(\overline{\Gamma})$ denotes any positive function satisfying
 $\varrho|_{\partial\Gamma}=0$ and, for $x_0\in\partial\Gamma$,  
$\lim_{x\to x_0}\varrho(x)/{\operatorname{dist~}(x,\partial\Gamma)}=\varrho_0>0$
with some $\varrho_0>0$. 
In particular, $H^{1/2}_{00}(\Gamma)$ is {strictly} included in
$H^{1/2}(\Gamma)$. 
The following result {follows directly} from \cite[Theorem 2.5]{gri76}
and \cite[Theorem 1.5.2.3]{gri85}. (A partial result is also reported in
\cite[Theorems 1.1 and 5.1]{sf00}.)

\begin{lemma}
The trace operator $v \mapsto \mu=\gamma_1 v$ is a linear and
 continuous operator of $H^1_\Gamma(\Omega_1)\to H^{1/2}_{00}(\Gamma)$. Conversely,  
there exists a linear and continuous operator $\mathcal{E}_1$ of $H^{1/2}_{00}(\Gamma)\to H^1_{\Gamma}(\Omega_1)$,
 which is called a lifting operator, such that $\gamma_1(\mathcal{E}_1\mu)=\mu$ for all $\mu\in H^{1/2}_{00}(\Gamma)$. The same propositions
 remain true if $\gamma_1$ and $\Omega_1$ are replaced by $\gamma_2$ and
 $\Omega_2$, respectively. 
\label{la:extension} 
\end{lemma}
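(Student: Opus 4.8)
The plan is to reduce both assertions to the classical trace and lifting theorems on the full Lipschitz boundary $\partial\Omega_1$, combined with Grisvard's characterization of the Lions--Magenes space as precisely the set of functions on $\Gamma$ whose extension by zero across the edge $\partial\Gamma$ remains in $H^{1/2}$ of the larger boundary. Write $\Sigma_1=\partial\Omega\cap\partial\Omega_1$ for the face of $\partial\Omega_1$ complementary to $\Gamma$, so that $\partial\Omega_1=\overline{\Gamma}\cup\overline{\Sigma_1}$ and the two relatively open faces meet along $\partial\Gamma$. For a function $\mu$ on $\Gamma$, let $\tilde{\mu}$ denote its extension by zero to $\partial\Omega_1$.

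First I would establish continuity of the trace. Given $v\in H^1_\Gamma(\Omega_1)$, the trace theorem on the Lipschitz domain $\Omega_1$ gives $\gamma(\Omega_1,\partial\Omega_1)v\in H^{1/2}(\partial\Omega_1)$ with $\|\gamma(\Omega_1,\partial\Omega_1)v\|_{H^{1/2}(\partial\Omega_1)}\le C\|v\|_{H^1(\Omega_1)}$. Since $v$ vanishes on $\Sigma_1$ by definition of $H^1_\Gamma(\Omega_1)$, this full trace vanishes on $\Sigma_1$ and therefore coincides with the zero-extension $\widetilde{\gamma_1 v}$. By \cite[Theorem 1.5.2.3]{gri85} (see also \cite[Theorem 2.5]{gri76}), a function on $\Gamma$ whose zero-extension lies in $H^{1/2}(\partial\Omega_1)$ belongs to $H^{1/2}_{00}(\Gamma)$, with $\|\gamma_1 v\|_{H^{1/2}_{00}(\Gamma)}\le C\|\widetilde{\gamma_1 v}\|_{H^{1/2}(\partial\Omega_1)}$. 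Chaining these two bounds yields the continuity of $v\mapsto\gamma_1 v$ into $H^{1/2}_{00}(\Gamma)$.

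For the lifting I would run the same equivalence in reverse. Given $\mu\in H^{1/2}_{00}(\Gamma)$, the same characterization provides $\tilde{\mu}\in H^{1/2}(\partial\Omega_1)$ with $\|\tilde{\mu}\|_{H^{1/2}(\partial\Omega_1)}\le C\|\mu\|_{H^{1/2}_{00}(\Gamma)}$. Composing with a bounded right inverse $\mathcal{R}\colon H^{1/2}(\partial\Omega_1)\to H^1(\Omega_1)$ of the trace, which exists because $\Omega_1$ is Lipschitz, I set $\mathcal{E}_1\mu=\mathcal{R}\tilde{\mu}$. Then $\gamma(\Omega_1,\partial\Omega_1)(\mathcal{E}_1\mu)=\tilde{\mu}$, so its restriction to $\Gamma$ equals $\mu$ (giving $\gamma_1\mathcal{E}_1\mu=\mu$), while its restriction to $\Sigma_1$ equals $\tilde{\mu}|_{\Sigma_1}=0$, which shows $\mathcal{E}_1\mu\in H^1_\Gamma(\Omega_1)$. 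Continuity of $\mathcal{E}_1$ follows from the two displayed estimates together with the boundedness of $\mathcal{R}$. The assertions for $\Omega_2$ are identical with $\Sigma_2=\partial\Omega\cap\partial\Omega_2$ in place of $\Sigma_1$.

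The crux is the zero-extension characterization of $H^{1/2}_{00}(\Gamma)$, and this is exactly where the weight $\varrho$ and the defining condition $\varrho^{-1/2}\mu\in L^2(\Gamma)$ enter: Grisvard's Hardy-type inequality near $\partial\Gamma$ is what converts control of $\|\tilde{\mu}\|_{H^{1/2}(\partial\Omega_1)}$ into control of $\|\varrho^{-1/2}\mu\|_{L^2(\Gamma)}$ and conversely. The main obstacle I anticipate is geometric rather than analytic: since $\Gamma$ and $\Sigma_1$ are faces of a polyhedron meeting at a dihedral angle along $\partial\Gamma$ (a finite point set when $d=2$, a polygonal curve when $d=3$), one must verify that the cited results, formulated for two adjacent boundary pieces, genuinely apply across this edge. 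This is handled by localizing with a partition of unity near $\partial\Gamma$ and flattening each boundary face, reducing matters to the model half-space situation covered by \cite{gri76,gri85}; away from $\partial\Gamma$ the spaces $H^{1/2}_{00}(\Gamma)$ and $H^{1/2}(\Gamma)$ coincide and no weight is needed.
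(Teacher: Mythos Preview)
Your argument is correct and matches the paper's approach: the paper does not write out a proof but simply states that the lemma ``follows directly from \cite[Theorem 2.5]{gri76} and \cite[Theorem 1.5.2.3]{gri85},'' and your proposal is precisely a careful unpacking of how those two results combine via the zero-extension characterization of $H^{1/2}_{00}(\Gamma)$ and the standard trace/lifting on $\partial\Omega_1$.
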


Suppose that 
\begin{equation}
\tag{H2}
f\in L^2(\Omega),\quad g_D\in H^{1/2}_{00}(\Gamma)\quad \mbox{and}\quad g_N\in
L^2(\Gamma).
\end{equation}
In view of Lemma \ref{la:extension}, there is $\tilde{g}_D\in V$ such
that $\gamma_1\tilde{g}_D=\gamma_2\tilde{g}_D=g_D$ on $\Gamma$ and
$\|\tilde{g}_D\|_{H^1(\Omega)}\le C\|g_D\|_{H^{1/2}_{00}(\Gamma)}$.  

Hereinafter, the symbol $C$ denotes various generic positive constants depending on
$\Omega$. In particular, it is {independent of }the discretization
parameter $h$ introduced below. 
If it is necessary to specify the dependence on other parameters, 
say $\mu_1,\mu_2,\ldots$, then we write them as $C(\mu_1,\mu_2,\ldots)$. 

Therefore, we can apply the Lax--Milgram theory to conclude that the problem \eqref{eq:2} admits a unique
solution $u\in V$ satisfying  
\begin{equation*}
 \|u_1\|_{H^1(\Omega_1)}
  + \|u_2\|_{H^1(\Omega_2)}
  \le C(\|f\|_{L^2(\Omega)}+\|g_D\|_{H^{1/2}_{00}(\Gamma)}+
  \|g_N\|_{L^2(\Gamma)}),
\end{equation*}
where $C=C(A)$.

Next we review the regularity property of the solution $u$. Suppose further that
\begin{equation*}
g_D\in H^{3/2}_{0}(\Gamma)\quad \mbox{and}\quad g_N\in
H^{1/2}(\Gamma).
\end{equation*}
However, in general, we do not expect that $u_1\in H^2(\Omega_1)$ and
$u_2\in H^2(\Omega_2)$, because of the presence of intersection points
$\Gamma\cap \partial\Omega$. (Even if we consider the case
$\Gamma\cap\partial\Omega=\emptyset$, we may have
$u_1\not\in H^2(\Omega_1)$ and $u_2\not\in H^2(\Omega_2)$.)
To state regularity properties of $u_1$ and $u_2$, it is useful to
introduce fractional order Sobolev spaces. We set
\begin{subequations} 
\label{eq:frac}
\begin{equation}
  |v|_{H^{1+\theta}(\omega)}^2=\sum_{i=1}^d\int \hspace{-2mm}
 \int_{\omega\times\omega}\frac{~|\partial_i v(x)-\partial_i v(y)|^2}{|x-y|^{d+2\theta}}~dxdy,
 \label{eq:33b}
\end{equation}
where $\omega\subset\mathbb{R}^d$, $\theta\in (0,1)$, and
$\partial_i=\partial/\partial x_i$. Then, fractional order
Sobolev spaces $H^{1+\theta}(\Omega_i)$, $i=1,2$, are defined as 
\begin{equation}
 \label{eq:330}
  H^{1+\theta}(\Omega_i)=\{v\in H^1(\Omega_i)\mid \|v\|_{H^{1+\theta}(\Omega_i)}^2=\|v\|_{H^1(\Omega_i)}^2+|v|_{H^{1+\theta}(\Omega_i)}^2<\infty\}.
\end{equation}
\end{subequations}

We assume that 
\begin{equation}
\tag{H2$'$}
g_D\in H_0^{s+1/2}(\Gamma)\quad \mbox{and}\quad g_N\in
H^{s-1/2}(\Gamma)
\end{equation}
and that the solution $u\in V$ of \eqref{eq:2} has the following
regularity property, 
\begin{equation}
 \label{eq:reg}
 \left\{
 \begin{array}{l}
 u_1\in H^{1+s}(\Omega_1),\quad 
 u_2\in H^{1+s}(\Omega_2)\quad \mbox{and}\\[1mm]
N_s(u)  \le C(\|f\|_{L^2(\Omega)}+\|g_D\|_{H^{s+1/2}_{0}(\Gamma)}+
  \|g_N\|_{H^{s-1/2}(\Gamma)})
 \end{array}
 \right.
\end{equation}
for some $s\in (1/2,1]$, where $N_s(u)=\|u_1\|_{H^{1+s}(\Omega_1)}
  + \|u_2\|_{H^{1+s}(\Omega_2)}$ and $C=C(A)$.

\begin{remark}
 \label{rem:reg1}
We can find no explicit reference to \eqref{eq:reg}. 
Nevertheless, we consider the problem under \eqref{eq:reg} on the
 analogy of Poisson interface problem. As an illustration, we consider the case $d=2$. 
Suppose that $x_0$ is an intersection point of $\partial\Omega$
 and $\overline{\Gamma}$. We then set $U=\mathscr{O}\cap \Omega$ and
 $U_i=U\cap\Omega_i$, $i=1,2$, where $\mathscr{O}$ is a neighbourhood
 of $x_0$. 
Assume that $U$ contains no corners of $\partial\Omega\cup\Gamma$ and no other
 intersection points except for $x_0$. Consider the unique solution $w\in H^1_0(\Omega)$ of 
\begin{equation*}
\kappa_1\int_{\Omega_1} \nabla w\cdot \nabla v~dx+\kappa_2\int_{\Omega_2} \nabla w\cdot \nabla v~dx=\int_\Omega fv~dx\qquad (\forall v\in H^1_0(\Omega)),
\end{equation*}
where $f\in L^2(\Omega)$ and $\kappa_1,\kappa_2$ are positive constants with
 $\kappa_1\ne \kappa_2$. Then, we have (see \cite[Theorem 6.2]{pet01})
\begin{equation*}
w|_{\Omega_i}\in H^{1+\beta}(U_i),\quad i=1,2,\quad
\beta=\min\left\{1,\frac{\pi}{2\theta}\right\}\in (1/2,1],
\end{equation*}
where $\theta$ denotes the maximum interior angle of $\partial\Omega_1$
 and $\partial\Omega_2$ at $x_0$. 
\end{remark}

We proceed to the presentation of our HDG schemes. 
We introduce a family of \emph{quasi-uniform} triangulations
$\{\mathcal{T}_h\}_h$ of $\Omega$. That is, $\{\mathcal{T}_h\}_h$ is a
family of \emph{shape-regular} triangulations that satisfies the
\emph{inverse assumptions} (see \cite[(4.4.15)]{bs08}). 
Hereinafter, we set $h=\max\{h_K\mid K\in \mathcal{T}_h\}$, where $h_K$
denotes the diameter of $K$. 
Let $\mathcal{E}_h=\{e\subset\partial K \mid K\in \mathcal{T}_h\}$ be
the set of all faces $(d=3)$/edges $(d=2)$ of elements, and set
$S_h=\cup_{K\in\mathcal{T}_h}\partial K=\cup_{e\in \mathcal{E}_h}e$. We
assume that there is a positive constant $\nu_1$ which is independent of $h$ such that 
\begin{equation}
 \tag{H3}
\max\left\{  \frac{h_e}{\rho_K},\ 
  \frac{h_K}{h_e}\right\}\le \nu_1
  \qquad (\forall e\subset\partial K,\ \forall
K\in\forall\mathcal{T}_h\in \{\mathcal{T}_h\}_h),
\end{equation}
where $h_e$ denotes the diameter of $e$ and $\rho_K$ the diameter of the
inscribed ball of $K$.

We use the following function spaces:
\begin{align*}
 H^{1+s}(\mathcal{T}_h)&=\{v\in L^2(\Omega)\mid v|_K\in H^{1+s}(K),\
 K\in\mathcal{T}_h\}; \\
 L^{2}_{\partial\Omega}(S_h)&=\{\hat{v}\in L^2(S_h)\mid \hat{v}|_e=0,\ e\in\mathcal{E}_{h,\partial\Omega}\};\\
 H^{1/2}_{\partial\Omega}(S_h)&=\{\hat{v}\in H^{1/2}(S_h)\mid \hat{v}|_e=0,\
 e\in\mathcal{E}_{h,\partial\Omega}\};\\
 \bm{V}^{1+s}(h)&=H^{1+s}(\mathcal{T}_h)\times H^{1/2}_{\partial\Omega}(S_h)
\end{align*}
for $s\in (1/2,1]$. 

Further, we assume that  
\begin{equation}
 \tag{H4}
\mbox{there exists a subset $\mathcal{E}_{h,\Gamma}$ of $\mathcal{E}_{h}$
  such that }\Gamma=\bigcup_{e\in \mathcal{E}_{h,\Gamma}}e,
\end{equation}
as shown for illustration in Fig. \ref{f:2}.  

\begin{figure}[tb]
	 \begin{center}
	  \includegraphics[width=.4\textwidth]{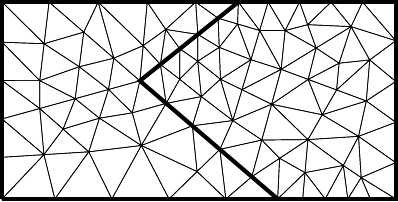}
	 \end{center}
   \caption{Triangulation satisfying (H4).}
\label{f:2}
\end{figure}

  We then set $\mathcal{E}_{h,\partial\Omega}=\{e\in\mathcal{E}_h\mid
e\subset\partial\Omega\}$ and $\mathcal{E}_{h,0}=\mathcal{E}_h\backslash
(\mathcal{E}_{h,\Gamma}\cup\mathcal{E}_{h,\partial\Omega})$. Assumption (H4) implies that $\mathcal{T}_{h,i}=\{K\in
\mathcal{T}_h\mid K\subset \overline{\Omega_i}\}$ is a triangulation of
$\Omega_i$ for $i=1,2$ and we can write   
\begin{equation}
 \label{eq:au0}
a(u,v)=\sum_{K\in\mathcal{T}_h}\int_K A\nabla u\cdot
 \nabla v~dx.
\end{equation}

Throughout this paper, we always assume that (H1), (H2), (H2$'$), (H3) and (H4) are
satisfied. 

For derivation of our HDG schemes, we examine a local conservation
property of the flux of the solution $u$.  
Let $K\in\mathcal{T}_h$.
Recall that, if $u$ is suitably regular, we have by \eqref{eq:1a} and Gauss--Green's formula
\[
 \int_{\Omega}(A\nabla u\cdot n_K) w~dS =\int_K A\nabla u\cdot
 \nabla w~dx-\int_K fw~dx
\]
for any $w\in H^{1}(K)$, where $n_K$ denotes the outer normal vector to
$\partial K$.
As mentioned above, the left-hand side of this identity is well-defined,
since \eqref{eq:reg} is assumed for some $s\in (1/2,1]$. However, we derive local
conservation properties (Lemmas \ref{la:1} and \ref{la:2} below) without
using the further regularity property \eqref{eq:reg}. That is, based on the
identity above, we introduce a
functional $\dual{A\nabla u\cdot n_K,\cdot}_{\partial K}$ on
$H^{1/2}(\partial K)$ by  
\begin{equation}
\label{eq:au}
 \dual{A\nabla u\cdot n_K,\phi}_{\partial K}=\int_K A\nabla u\cdot
 \nabla (Z\phi)~dx-\int_K f(Z\phi)~dx
\end{equation}
for any $\phi\in H^{1/2}(\partial K)$, where $Z\phi\in H^1(K)$ denotes a
suitable extension of $\phi$ such that $\|Z\phi\|_{H^1(K)}\le
C\|\phi\|_{H^{1/2}(\partial K)}$. Actually, the
definition of $\dual{A\nabla u\cdot n_K,\cdot}_{\partial K}$ above does
not depend on the way of extension of $\phi$. Below, for the solution
$u$ of \eqref{eq:1}, we simply write 
\begin{equation}
\label{eq:au2}
 \int_{\partial K}(A\nabla u\cdot n_K)\phi~dS=\int_K A\nabla u\cdot
 \nabla (Z\phi)~dx-\int_K f(Z\phi)~dx
\end{equation}
to express \eqref{eq:au}. 

The following lemmas are readily obtainable consequences of
\eqref{eq:au0} and \eqref{eq:au2}. 

\begin{lemma}
 \label{la:1}
 For the solution $u$ of \eqref{eq:2}, we have
 \begin{equation}
  \label{eq:5}
   \sum_{K\in\mathcal{T}_h}\int_{\partial K}(A\nabla u\cdot
   n_K)\hat{v}~dS= \int_\Gamma g_N\hat{v}~dS\qquad (\hat{v}\in H^{1/2}_{\partial\Omega}(S_h)).
 \end{equation}
\end{lemma}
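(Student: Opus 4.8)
The plan is to reduce the claimed identity to the variational equation \eqref{eq:2b} by constructing, for each test function $\hat{v}\in H^{1/2}_{\partial\Omega}(S_h)$, a single global lifting $w\in H^1_0(\Omega)$ whose trace on the skeleton $S_h$ coincides with $\hat{v}$. The point is that since $\hat{v}$ is single-valued on $S_h$, any element-wise extension can be glued into a globally $H^1$ function, and since $\hat{v}$ vanishes on $\partial\Omega$, this function lies in $H^1_0(\Omega)$. Once such $w$ is available, I would use it as the extension $Z(\hat{v}|_{\partial K})=w|_K$ in the defining formula \eqref{eq:au2}, which is legitimate precisely because the value of $\dual{A\nabla u\cdot n_K,\cdot}_{\partial K}$ does not depend on the chosen extension.

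First I would build $w$. For each $K\in\mathcal{T}_h$, the restriction $\hat{v}|_{\partial K}$ belongs to $H^{1/2}(\partial K)$, so it admits an extension $w_K\in H^1(K)$ with $w_K|_{\partial K}=\hat{v}|_{\partial K}$ and $\|w_K\|_{H^1(K)}\le C\|\hat{v}\|_{H^{1/2}(\partial K)}$. Setting $w|_K=w_K$, the matching of traces across every interior face (which holds because $\hat{v}$ is single-valued on $S_h$) guarantees $w\in H^1(\Omega)$, while $\hat{v}|_e=0$ for $e\in\mathcal{E}_{h,\partial\Omega}$ forces $w=0$ on $\partial\Omega$; hence $w\in H^1_0(\Omega)$.

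With this $w$ in hand, I would substitute into \eqref{eq:au2} and sum over the elements:
\[
\sum_{K\in\mathcal{T}_h}\int_{\partial K}(A\nabla u\cdot n_K)\hat{v}~dS
=\sum_{K\in\mathcal{T}_h}\left(\int_K A\nabla u\cdot\nabla w~dx-\int_K fw~dx\right)
=a(u,w)-\int_\Omega fw~dx,
\]
where the last equality invokes \eqref{eq:au0}. Since $w\in H^1_0(\Omega)$, the variational identity \eqref{eq:2b} gives $a(u,w)=\int_\Omega fw~dx+\int_\Gamma g_N w~dS$, and substituting leaves only $\int_\Gamma g_N w~dS$. Because the trace of $w$ on $\Gamma\subset S_h$ equals $\hat{v}$, this equals $\int_\Gamma g_N\hat{v}~dS$, which is the desired conclusion.

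The only delicate point, and the step I would treat most carefully, is the construction and gluing of $w$, namely confirming that the element-wise extensions really assemble into a member of $H^1_0(\Omega)$. This rests on two facts: that $\hat{v}\in H^{1/2}(S_h)$ is single-valued (so adjacent element traces agree and no jump is created across interior faces), and that its vanishing on $\partial\Omega$ transfers to $w$. Both follow at once from the definition of $H^{1/2}_{\partial\Omega}(S_h)$, so I do not expect any genuine obstacle here; the remaining steps are a direct unwinding of \eqref{eq:au2} together with \eqref{eq:au0} and \eqref{eq:2b}.
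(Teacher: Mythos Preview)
Your argument is correct and is exactly the detailed version of what the paper leaves implicit: the authors only remark that Lemmas~\ref{la:1} and~\ref{la:2} are ``readily obtainable consequences of \eqref{eq:au0} and \eqref{eq:au2},'' and your construction of a global lifting $w\in H^1_0(\Omega)$ to serve as the common extension $Z(\hat v|_{\partial K})$ is precisely how one unpacks that remark. The only point worth flagging is that your step ``$\hat v|_{\partial K}\in H^{1/2}(\partial K)$'' is already built into the paper's own definition \eqref{eq:au} of the boundary functional, so it is not an additional hypothesis but the standing interpretation of $H^{1/2}_{\partial\Omega}(S_h)$ in this context.
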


\begin{lemma}
 \label{la:2}
 For the solution $u$ of \eqref{eq:2}, we have
 \begin{multline}
  \label{eq:6}
\sum_{K\in\mathcal{T}_h}\int_K A\nabla u\cdot\nabla v~dx
+   \sum_{K\in\mathcal{T}_h}\int_{\partial K}(A\nabla u\cdot
n_K)(\hat{v}-v)~dS\\
=
\sum_{K\in\mathcal{T}_h}\int_K fv~dx
+\int_\Gamma g_N\hat{v}~dS\qquad ((v,\hat{v})\in \bm{V}^{1}(h)).
 \end{multline}
\end{lemma}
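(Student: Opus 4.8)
The plan is to derive \eqref{eq:6} by starting from the local Gauss--Green identity encoded in the definition \eqref{eq:au2}, summing over all elements, and then reorganizing the resulting terms so that the interface and boundary contributions collapse to the single integral $\int_\Gamma g_N\hat{v}~dS$ via Lemma \ref{la:1}. First I would take an arbitrary test pair $(v,\hat{v})\in\bm{V}^1(h)$ and apply \eqref{eq:au2} with the choice $\phi=v|_{\partial K}$, which is admissible since $v|_K\in H^1(K)$ has a trace in $H^{1/2}(\partial K)$. This gives, for each $K$,
\begin{equation*}
\int_{\partial K}(A\nabla u\cdot n_K)v~dS=\int_K A\nabla u\cdot\nabla v~dx-\int_K fv~dx,
\end{equation*}
where the extension $Zv$ may simply be taken as $v|_K$ itself because the functional in \eqref{eq:au} is independent of the choice of extension.

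Next I would sum this identity over all $K\in\mathcal{T}_h$ and rearrange to isolate the volume terms:
\begin{equation*}
\sum_{K\in\mathcal{T}_h}\int_K A\nabla u\cdot\nabla v~dx=\sum_{K\in\mathcal{T}_h}\int_K fv~dx+\sum_{K\in\mathcal{T}_h}\int_{\partial K}(A\nabla u\cdot n_K)v~dS.
\end{equation*}
Substituting this into the left-hand side of the target identity \eqref{eq:6}, the two boundary sums combine:
\begin{equation*}
\sum_{K\in\mathcal{T}_h}\int_{\partial K}(A\nabla u\cdot n_K)v~dS+\sum_{K\in\mathcal{T}_h}\int_{\partial K}(A\nabla u\cdot n_K)(\hat{v}-v)~dS=\sum_{K\in\mathcal{T}_h}\int_{\partial K}(A\nabla u\cdot n_K)\hat{v}~dS.
\end{equation*}
The cancellation of the $\pm v$ boundary contributions is the mechanism that makes the hybrid formulation consistent, and it is exactly why the flux need only be tested against the single-valued trace $\hat{v}$ rather than the elementwise trace of $v$.

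Finally I would invoke Lemma \ref{la:1}, which states precisely that the remaining flux sum against $\hat{v}\in H^{1/2}_{\partial\Omega}(S_h)$ equals $\int_\Gamma g_N\hat{v}~dS$; combining this with the $\sum_K\int_K fv~dx$ term already produced yields the right-hand side of \eqref{eq:6} and completes the proof. I expect the main subtlety, rather than a genuine obstacle, to be the well-definedness bookkeeping: one must confirm that the pairing $\dual{A\nabla u\cdot n_K,\cdot}_{\partial K}$ applied to the trace of $v|_K$ is legitimate and independent of the extension, which is guaranteed by the remark following \eqref{eq:au}, and that $\hat{v}$ lies in $H^{1/2}(\partial K)$ on each element so that Lemma \ref{la:1} applies. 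Since both lemmas are explicitly described as direct consequences of \eqref{eq:au0} and \eqref{eq:au2}, no regularity beyond $u\in V$ is needed, and the argument is purely algebraic once the flux functional is in place.
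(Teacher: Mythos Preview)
Your argument is correct and is exactly the derivation the paper has in mind: the authors do not spell out a proof but state that Lemmas~\ref{la:1} and~\ref{la:2} are ``readily obtainable consequences of \eqref{eq:au0} and \eqref{eq:au2}'', and your proof carries this out by applying \eqref{eq:au2} elementwise with $Z\phi=v|_K$, summing, and then collapsing the residual flux sum via Lemma~\ref{la:1}. The only points worth keeping explicit in a write-up are the ones you already flag: that $v|_K\in H^1(K)$ makes $v|_{\partial K}\in H^{1/2}(\partial K)$ admissible in \eqref{eq:au2}, and that $\hat{v}\in H^{1/2}_{\partial\Omega}(S_h)$ so Lemma~\ref{la:1} applies.
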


We discretize the expression \eqref{eq:6} by the idea of HDG.
We use the following finite element spaces: 
 \begin{align*}
\bm{V}_h&=V_h\times \hat{V}_h;\\
V_h&=V_{h,k}=\{v\in H^{1}(\mathcal{T}_h)\mid v|_K\in P_k(K),\
  K\in\mathcal{T}_h\},\quad k\ge 1\mbox{: integer};\\
\hat{V}_h&=\hat{V}_{h,l}=\{\hat{v}\in L_{\partial\Omega}^2(S_h)\mid \hat{v}|_e\in P_l(e),\
  e\in\mathcal{E}_{h,0}\cup\mathcal{E}_{h,\Gamma}\},\quad l\ge 1\mbox{: integer},
 \end{align*}
where $P_k(K)$ denotes the set of all polynomials defined in $K$ with
degree $\le k$.

At this stage, we can state our scheme: Find $\bm{u}_h=(u_h,\hat{u}_h)\in
\bm{V}_h$ such that
\begin{subequations} 
\label{eq:10}
\begin{equation}
\label{eq:10a}
 B_h(\bm{u}_h,\bm{v}_h)=L_h(\bm{v}_h)\qquad (\forall \bm{v}_h=(v_h,\hat{v}_h)\in\bm{V}_h),
\end{equation}
where
\begin{multline}
B_h(\bm{u}_h,\bm{v}_h) = \underbrace{\sum_{K\in\mathcal{T}_h}\int_K A\nabla
 u_h\cdot\nabla v_h~dx }_{=B_1} 
  \underbrace{-\sum_{K\in\mathcal{T}_h}\int_{\partial K}(A\nabla u_h\cdot
 n_K)({v}_h-\hat{v}_h)~dS}_{=B_2} \\
 \underbrace{-\sum_{K\in\mathcal{T}_h}\int_{\partial K}(A\nabla v_h\cdot
 n_K)({u}_h-\hat{u}_h)~dS}_{=B_3} 
 \underbrace{+\sum_{K\in\mathcal{T}_h}\sum_{e\subset \partial
 K}\int_e\frac{\eta_e}{h_e} ({u}_h-\hat{u}_h)({v}_h-\hat{v}_h)~dS}_{=B_4}
\label{eq:10b}
\end{multline}
and
\begin{multline}
 L_h(\bm{v}_h) = \underbrace{\sum_{K\in\mathcal{T}_h}\int_K fv_h~dx+\int_\Gamma
 g_N\hat{v}~dS}_{=L_1}
 \underbrace{-\int_\Gamma g_D(A\nabla v_h\cdot n_1)~dS}_{=L_2} \\
\underbrace{+\sum_{e\in\mathcal{E}_{h,\Gamma}}\int_e\frac{\sigma_{K,e}}{2}\frac{\eta_e}{h_e}g_D
 ({v}_h-\hat{v}_h)~dS}_{=L_3}. \label{eq:10c}
\end{multline}
Therein, $\sigma_{K,e}$ is defined by 
\begin{equation}
 \sigma_{K,e}=
 \begin{cases}
 1 & (K\in \mathcal{T}_{h,1})\\
 -1 & (K\in \mathcal{T}_{h,2})
 \end{cases}
  \label{eq:10i}
\end{equation}
and $\eta_e$ denotes the penalty parameter such that
\begin{equation}
 0<\eta_{\min}=\inf_{\mathcal{T}_h\in\{\mathcal{T}_h\}_h}
  \min_{e\in\mathcal{E}_h}\eta_e,\qquad
\eta_{\max}=\sup_{\mathcal{T}_h\in\{\mathcal{T}_h\}_{h}}\max_{e\in\mathcal{E}_h}\eta_e<\infty.
  \label{eq:10j}
\end{equation}
\end{subequations}

The main advantage of the scheme \eqref{eq:10} is stated as the
following lemma.

\begin{lemma}[Consistency]
 \label{la:3}
Let $u\in V$ be the solution of \eqref{eq:2} and introduce $\hat{u}\in
 H^{1/2}_{\partial\Omega}(S_h)$ by
\[
 \hat{u}=
 \begin{cases}
\frac{1}{2}[\gamma(K_1,e)u+\gamma(K_2,e)u] &
  (e\in\mathcal{E}_{h,0}\cup\mathcal{E}_{h,\Gamma},\ e=\partial K_1\cap
  \partial K_2)\\
\gamma(K,e)u& (e\in \mathcal{E}_{h,\partial\Omega},\ e\subset\partial K).  
 \end{cases}
 \]
 Then, $\bm{u}=(u,\hat{u})\in \bm{V}^1(h)$ solves 
 \[
   B_h(\bm{u},\bm{v}_h)=L_h(\bm{v}_h)\qquad (\forall \bm{v}_h\in\bm{V}_h).
 \]
\end{lemma}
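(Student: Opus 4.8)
The plan is to verify the membership $\bm{u}\in\bm{V}^1(h)$ and then to evaluate $B_h(\bm{u},\bm{v}_h)$ by grouping its four terms as $(B_1+B_2)+B_3+B_4$ and matching them against $L_1$, $L_2$ and $L_3$, respectively. Throughout, the boundary integrals involving $A\nabla u\cdot n_K$ must be read in the weak sense of \eqref{eq:au2}, so that they are meaningful for the low-regularity solution $u\in V$. The decisive preliminary step is to compute the quantity $u-\hat{u}$ on each edge $e$, distinguishing the cases $e\in\mathcal{E}_{h,0}$, $e\in\mathcal{E}_{h,\partial\Omega}$ and $e\in\mathcal{E}_{h,\Gamma}$.

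For the membership, $u\in V$ gives $u|_K\in H^1(K)$ for every $K$, hence $u\in H^{1}(\mathcal{T}_h)$; moreover $\hat{u}$ is built from traces of $u$ and vanishes on $\mathcal{E}_{h,\partial\Omega}$ because $u=0$ on $\partial\Omega$, so that $\hat{u}\in H^{1/2}_{\partial\Omega}(S_h)$ and $\bm{u}\in\bm{V}^1(h)$. For the edge computation, on $e\in\mathcal{E}_{h,0}$ the two one-sided traces of $u\in H^1(\Omega_i)$ coincide, so $\hat{u}$ is the common trace and $u-\hat{u}=0$; on $e\in\mathcal{E}_{h,\partial\Omega}$ both $u$ and $\hat{u}$ vanish, so again $u-\hat{u}=0$; and on $e\in\mathcal{E}_{h,\Gamma}$ with $e=\partial K_1\cap\partial K_2$, $K_1\subset\Omega_1$, $K_2\subset\Omega_2$, the averaging definition of $\hat{u}$ together with the jump relation \eqref{eq:2a} gives $u-\hat{u}=\tfrac{1}{2}(\gamma_1 u-\gamma_2 u)=\tfrac{1}{2}g_D$ on the $K_1$-side and $-\tfrac{1}{2}g_D$ on the $K_2$-side, that is $u-\hat{u}=\tfrac{\sigma_{K,e}}{2}g_D$ in the notation of \eqref{eq:10i}.

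I would then argue as follows. First, rewriting $-(v_h-\hat{v}_h)=\hat{v}_h-v_h$, the pair $B_1+B_2$ is precisely the left-hand side of \eqref{eq:6} evaluated at $(v_h,\hat{v}_h)\in\bm{V}^1(h)$; hence Lemma \ref{la:2} yields $B_1+B_2=L_1$ at once. Next, inserting the edge values of $u-\hat{u}$ into $B_4$ annihilates every term except those on $\mathcal{E}_{h,\Gamma}$, and the two element contributions attached to each interface edge reproduce exactly the terms defining $L_3$, so $B_4=L_3$. Finally, substituting $u-\hat{u}$ into $B_3$ again leaves only $\mathcal{E}_{h,\Gamma}$; combining the $K_1$- and $K_2$-contributions and using $n_{K_2}=n_2=-n_1$, the two one-sided normal fluxes add up to the arithmetic mean of $A\nabla v_h\cdot n_1$ across $\Gamma$, giving $B_3=-\int_\Gamma g_D(A\nabla v_h\cdot n_1)\,dS=L_2$. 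Adding the three identities produces $B_h(\bm{u},\bm{v}_h)=L_h(\bm{v}_h)$ for all $\bm{v}_h\in\bm{V}_h$.

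The main obstacle is the interface term $B_3$: one must track the signs carefully, since the factor $u-\hat{u}=\tfrac{\sigma_{K,e}}{2}g_D$ flips sign between $K_1$ and $K_2$ while the outward normals satisfy $n_{K_2}=-n_1$, and it is exactly the cancellation of these two sign flips that merges the two one-sided fluxes into the single mean flux $A\nabla v_h\cdot n_1$ appearing in $L_2$. A secondary but essential point, needed both so that $B_2$ (the flux of the low-regularity $u$) is well defined and so that Lemma \ref{la:2} is applicable, is the consistent use of the weak flux functional \eqref{eq:au2} in place of a pointwise normal trace.
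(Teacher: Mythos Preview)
Your proposal is correct and follows essentially the same route as the paper: use Lemma~\ref{la:2} to get $B_1+B_2=L_1$, then compute $u-\hat u$ edgewise (zero off $\Gamma$, $\pm\tfrac12 g_D$ on $\Gamma$) to obtain $B_3=L_2$ and $B_4=L_3$. Your explicit remark that $A\nabla v_h\cdot n_1$ in $L_2$ must be read as the arithmetic mean of the two one-sided fluxes is a useful clarification that the paper leaves implicit.
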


\begin{proof}
In view of Lemma \ref{la:2}, we know that 
 $B_1+B_2=L_1$.
 We show that
 $B_3=L_2$ and $B_4=L_3$.
 For $e\in
 \mathcal{E}_{h,0}\cap \mathcal{E}_{h,\partial\Omega}$, we
 have $u-\hat{u}=0$ on $e$, since $\hat{u}=\gamma(\Omega,\Gamma)u$ on $e$. Hence,
\begin{align*}
 B_3
 &=\sum_{e\in\mathcal{E}_{h,\Gamma}}
 \left[
 \int_e (A\nabla v_h\cdot n_{1})({u}_h-\hat{u}_h)~dS+
 \int_e (A\nabla v_h\cdot n_{2})({u}_h-\hat{u}_h)~dS \right]\\
 &=\sum_{e\in\mathcal{E}_{h,\Gamma}}
 \left[
 \int_e (A\nabla v_h\cdot n_{1})\frac{u_1-u_2}{2}~dS-
 \int_e (A\nabla v_h\cdot n_{1})\frac{u_2-u_1}{2}~dS \right]\\
 &=
 \int_\Gamma (A\nabla v_h\cdot n_{1}) (u_1-u_2)~dS,
\end{align*}
where $e=\partial K_1\cap\partial K_2$ with $K_1\in\mathcal{T}_{h,1}$
 and $K_2\in\mathcal{T}_{h,2}$. This, together with \eqref{eq:2a}, gives
 $B_3=L_2$. Using the same notion, we have
\begin{align*}
B_4
 &=\sum_{e\in\mathcal{E}_{h,\Gamma}}
 \left[
 \int_e\frac{\eta_e}{h_e} ({u}_1-\hat{u})({v}_{h,1}-\hat{v}_h)~dS+
 \int_e\frac{\eta_e}{h_e} ({u}_2-\hat{u})({v}_{h,2}-\hat{v}_h)~dS
 \right]\\
 &=\sum_{e\in\mathcal{E}_{h,\Gamma}}
 \left[
 \int_e\frac{\eta_e}{h_e} \frac{g_D}{2}({v}_{h,1}-\hat{v}_h)~dS
 -\int_e\frac{\eta_e}{h_e}\frac{g_D}{2}({v}_{h,2}-\hat{v}_h)~dS
 \right]=L_3,
\end{align*}
 which completes the proof. 
\end{proof}

An alternative scheme is given as
\begin{subequations} 
 \label{eq:100}
   \begin{equation}
 \label{eq:10z}
 B_h(\bm{u}_h,\bm{v}_h)=L_h'(\bm{v}_h)\qquad (\forall \bm{v}_h\in\bm{V}_h),
  \end{equation}
  where
 \begin{equation} 
L_h'(\bm{v}_h)=L_1+L_2+\sum_{e\in\mathcal{E}_{h,\Gamma}}\int_e\sigma_{K,e}'\frac{\eta_e}{h_e}
 g_D({v}_h-\hat{v}_h)~dS
 \label{eq:10hh}
 \end{equation}
and  
\begin{equation}
 \sigma_{K,e}'=
 \begin{cases}
 1 & (K\in \mathcal{T}_{h,1})\\
 0 & (K\in \mathcal{T}_{h,2}).
 \end{cases}
  \label{eq:10ii}
\end{equation}
\end{subequations}
 Lemma \ref{la:3} remains valid for \eqref{eq:100} with an obvious
  modification of the definition of $\hat{u}$. Therefore, all the
  following results also remain true for \eqref{eq:100}. Hence, we
  explicitly study only \eqref{eq:10} below. 

 \begin{remark}
  \label{rem:17}
  We restrict ourselves to simplicial triangulations; that is, we are
  assuming that each $K\in\mathcal{T}_h\in\{\mathcal{T}_h\}_h$ is a
  $d$-simplex. However, we are able to consider more general shape of
  elements. For example, for $d=2$, each $K$ could be an $m$-polygonal domain, where $m$ is an integer and can
differ with $K$. We assume that $m$ is bounded from above
independently of a family of triangulations and
  $\partial K$ does not intersect with itself. In particular, we can
  consider rectangular meshes as well. Moreover, $\bm{V}_h$ could be
  replaced by any finite dimensional subspace $\bm{V}_h'$ of
  $\bm{V}^1(h)$. See \cite{oik10,ok10} for the detail of modifications. 
 \end{remark}

\section{Well-posedness}
\label{sec:3}

In this section, we establish the well-posedness of the scheme \eqref{eq:10}.
First, we recall the following standard results; \eqref{eq:iv} is the
standard inverse inequality (see \cite[Lemma 4.5.3]{bs08}) and
\eqref{eq:tr} follows from the standard trace inequalities (see also
Appendix \ref{sec:ap}). 

\begin{lemma}
 \label{la:11}
 For $K\in \mathcal{T}_h$, we have following inequalities. 

\noindent (Inverse inequality)
       \begin{equation}
	\label{eq:iv}
	 |v_h|_{H^1(K)}\le C_{\mathrm{IV}}h_{K}^{-1}\|v_h\|_{L^2(K)}\qquad (v_h\in V_h).
       \end{equation}

\noindent (Trace inequalities) 
  \begin{subequations} \label{eq:tr}
   \begin{align}
	  \|v\|_{L^2(e)}^2 &\le
	  C_{0,\mathrm{T}}h_e^{-1}\left(\|v\|_{L^2(K)}^2+h_K^2|v|_{H^1(K)}^2\right)&&(v\in
    H^1(K)), \label{eq:tr1}\\
	  \|\nabla v\|_{L^2(e)}^2 &\le
    C_{1,\mathrm{T}}h_e^{-1}\left(\|v\|_{H^1(K)}^2+h_K^2|v|_{H^2(K)}^2\right)&& (v\in
    H^2(K)).\label{eq:tr2}
   \end{align}
  \end{subequations}
Those $C_{\mathrm{IV}}$, $C_{0,\mathrm{T}}$ and $C_{1,\mathrm{T}}$ are
 absolute positive constants. 
\end{lemma}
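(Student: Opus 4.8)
The plan is to prove all three inequalities by the classical \emph{scaling argument}: transplant $K$ to a fixed reference simplex $\hat K$ via the affine map $F_K(\hat x)=B_K\hat x+b_K$ with $F_K(\hat K)=K$, establish the estimate on $\hat K$ where it is dimension-free, and then transport it back to $K$ while keeping careful track of how each norm scales. On $\hat K$ the only two ingredients needed are: (i) the equivalence of all norms on the finite-dimensional space $P_k(\hat K)$, which yields $|\hat v|_{H^1(\hat K)}\le \hat C\|\hat v\|_{L^2(\hat K)}$ for every polynomial $\hat v$; and (ii) the continuity of the trace operator $H^1(\hat K)\to L^2(\partial\hat K)$, which yields $\|\hat v\|_{L^2(\hat e)}^2\le \hat C(\|\hat v\|_{L^2(\hat K)}^2+|\hat v|_{H^1(\hat K)}^2)$ for every $\hat v\in H^1(\hat K)$ and every face $\hat e$ of $\hat K$. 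Shape-regularity enters only through the standard bounds $\|B_K\|\le C h_K$ and $\|B_K^{-1}\|\le C\rho_K^{-1}$, together with $|\det B_K|\sim h_K^d$.

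For \eqref{eq:iv} I would set $\hat v=v_h\circ F_K$ and use $\nabla v_h=B_K^{-\mathrm{T}}\,\hat\nabla\hat v$ together with the change-of-variables factor $|\det B_K|$, obtaining $|v_h|_{H^1(K)}^2\le |\det B_K|\,\|B_K^{-1}\|^2|\hat v|_{H^1(\hat K)}^2$ and $\|\hat v\|_{L^2(\hat K)}^2=|\det B_K|^{-1}\|v_h\|_{L^2(K)}^2$. Feeding in (i) and the bound $\|B_K^{-1}\|\le C\rho_K^{-1}\le C'h_K^{-1}$ (the last step using (H3)) collapses the determinants and gives $|v_h|_{H^1(K)}\le C_{\mathrm{IV}}h_K^{-1}\|v_h\|_{L^2(K)}$.

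For \eqref{eq:tr1} the same change of variables shows $\|v\|_{L^2(e)}^2\sim h_e^{d-1}\|\hat v\|_{L^2(\hat e)}^2$ (surface-measure scaling), while $\|\hat v\|_{L^2(\hat K)}^2\sim h_K^{-d}\|v\|_{L^2(K)}^2$ and $|\hat v|_{H^1(\hat K)}^2\le C h_K^{-d+2}|v|_{H^1(K)}^2$. Inserting these into (ii) yields $\|v\|_{L^2(e)}^2\le C h_e^{d-1}h_K^{-d}(\|v\|_{L^2(K)}^2+h_K^2|v|_{H^1(K)}^2)$, and because (H3) forces $h_e\sim h_K$ the prefactor reduces to $h_e^{-1}$, which is exactly \eqref{eq:tr1}. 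The second trace inequality \eqref{eq:tr2} then follows with no new idea: applying \eqref{eq:tr1} componentwise to $\partial_i v\in H^1(K)$ for $v\in H^2(K)$ and summing over $i$ turns the right-hand side into $C_{0,\mathrm{T}}h_e^{-1}(|v|_{H^1(K)}^2+h_K^2|v|_{H^2(K)}^2)$, which is bounded by the claimed expression since $|v|_{H^1(K)}\le\|v\|_{H^1(K)}$.

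The content here is entirely routine, so the only real care is bookkeeping: one must check that every constant produced along the way depends solely on $\hat K$, on $k$, and on the shape-regularity/inverse-assumption constant $\nu_1$ of (H3), so that the resulting $C_{\mathrm{IV}}, C_{0,\mathrm{T}}, C_{1,\mathrm{T}}$ are indeed independent of $h$ and of the particular $K$ and $e$. The one genuinely fiddly point is the surface-measure scaling $\|v\|_{L^2(e)}^2\sim h_e^{d-1}\|\hat v\|_{L^2(\hat e)}^2$, whose implied constants must be controlled two-sidedly by shape-regularity, and the reconciliation of the $h_K$-powers coming from the element with the single $h_e^{-1}$ demanded by the statement, which is precisely where the comparability $h_e\sim h_K$ from (H3) is indispensable. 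Finally, for the non-simplicial elements admitted in Remark \ref{rem:17} a single reference element is unavailable, and one would instead argue by a direct scaling to a unit-diameter copy of $K$ combined with a compactness (Bramble--Hilbert type) argument; this is the only place where the proof would require a genuine modification.
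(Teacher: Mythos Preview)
Your proposal is correct and follows exactly the standard scaling-to-the-reference-element argument that the paper invokes: the paper does not supply its own proof of this lemma but simply cites \cite[Lemma 4.5.3]{bs08} for \eqref{eq:iv} and calls \eqref{eq:tr} ``standard'', and its Appendix~\ref{sec:ap} carries out the identical affine-pullback computation you describe for the fractional analogue \eqref{eq:tr1a}. Your derivation of \eqref{eq:tr2} by applying \eqref{eq:tr1} componentwise to $\partial_i v$ is likewise the intended route.
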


We use the following HDG norms:
\begin{subequations} 
 \label{eq:21}
\begin{align}
  \|\bm{v}\|_{1,h}^2&=
  \sum_{K\in\mathcal{T}_h}|v|_{H^1(K)}^2+
  \sum_{K\in\mathcal{T}_h}\sum_{e\subset\partial K}
 {\frac{\eta_e}{h_e}}\left\|\hat{v}-v\right\|_{L^2(e)}^2;
 \label{eq:21a}\\
  \|\bm{v}\|_{2,h}^2 &=
  \sum_{K\in\mathcal{T}_h}|v|_{H^1(K)}^2+
  \sum_{K\in\mathcal{T}_h}h_K^2|v|_{H^2(K)}^2+
 \sum_{K\in\mathcal{T}_h}\sum_{e\subset\partial K}
 \frac{\eta_e}{h_e}
 \left\|\hat{v}-v\right\|_{L^2(e)}^2.
 \label{eq:21b}
\end{align}
\end{subequations}
Moreover, set
\[
 \alpha =\max\left\{
\sup_{x\in{\Omega}_1}
  \sup_{{\xi}\in\mathbb{R}^d}
\frac{|A(x){\xi}|}{|{\xi}|},\ 
\sup_{x\in{\Omega}_2}
  \sup_{{\xi}\in\mathbb{R}^d}
\frac{|A(x){\xi}|}{|{\xi}|}
 \right\}.
\]

\begin{remark}
 \label{rem:20}
In view of \eqref{eq:iv}, two norms
 $\|\bm{v}\|_{1,h}$ and $\|\bm{v}\|_{2,h}$ are equivalent norms in the
 finite dimensional space $\bm{V}_h$. That is, there exists a positive
 constant $C_0$ that depends only on $C_{\mathrm{IV}}$ such that 
\begin{equation}
 \label{eq:eqv}
\|\bm{v}\|_{1,h}\le \|\bm{v}\|_{2,h}\le C_0 \|\bm{v}\|_{1,h}\qquad (\bm{v}_h\in\bm{V}_h).
\end{equation}
\end{remark}

\begin{lemma}
 \label{la:7}
{(Boundness)} For any $\eta_{\min}>0$, there exists a positive constant $C_{\mathrm{b}}=C_{\mathrm{b}}(\alpha,\eta_{\min},d,C_{1,\mathrm{T}})$ such that
       \begin{equation}
	\label{eq:bdd}
	 B_h(\bm{w},\bm{v})\le C_{\mathrm{b}}\|\bm{w}\|_{2,h}\|\bm{v}\|_{2,h}\qquad
	 (\bm{w},\bm{v}\in \bm{V}^2(h)).
       \end{equation}

 \noindent {(Coercivity)} There exist positive constants $\eta^*=\eta^*(\alpha,\lambda_{\min},d,C_{1,\mathrm{T}},C_{\mathrm{IV}})$ and
	      $C_{\mathrm{c}}=C_{\mathrm{c}}(\lambda_{\min},C_{\mathrm{IV}})$ such that, if $\eta_{\min}\ge \eta^*$, we have 
       \begin{equation}
	\label{eq:coer}
	 B_h(\bm{v}_h,\bm{v}_h)\ge C_{\mathrm{c}}\|\bm{v}_h\|_{2,h}^2\qquad
	 (\bm{v}_h\in \bm{V}_h).
       \end{equation}
\end{lemma}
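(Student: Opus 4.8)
The plan is to estimate the four pieces $B_1,B_2,B_3,B_4$ of $B_h$ separately via the Cauchy--Schwarz inequality together with the trace inequalities of Lemma \ref{la:11}, and then to recombine them. The two ``volume/penalty'' pieces are immediate: using the definition of $\alpha$ and Cauchy--Schwarz I would bound $B_1\le \alpha\bigl(\sum_K|w|_{H^1(K)}^2\bigr)^{1/2}\bigl(\sum_K|v|_{H^1(K)}^2\bigr)^{1/2}$, and $B_4$ is controlled directly by the penalty blocks of $\|\bm{w}\|_{2,h}$ and $\|\bm{v}\|_{2,h}$ by Cauchy--Schwarz over faces. The genuine work is in the flux terms $B_2$ and $B_3$.

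For $B_2$ I would estimate $\|A\nabla w\cdot n_K\|_{L^2(e)}\le\alpha\|\nabla w\|_{L^2(e)}$ and then apply the trace inequality \eqref{eq:tr1} to each component $\partial_i w\in H^1(K)$, which gives the seminorm form $\|\nabla w\|_{L^2(e)}^2\le C_{0,\mathrm{T}}h_e^{-1}\bigl(|w|_{H^1(K)}^2+h_K^2|w|_{H^2(K)}^2\bigr)$ (equivalently the gradient version of \eqref{eq:tr2}); crucially only the seminorms that make up $\|\bm{w}\|_{2,h}$ appear. Pairing the resulting factor $h_e^{-1/2}$ with $\|v-\hat v\|_{L^2(e)}$ and writing $h_e^{-1/2}=\eta_e^{-1/2}(\eta_e/h_e)^{1/2}\le\eta_{\min}^{-1/2}(\eta_e/h_e)^{1/2}$ reproduces exactly the penalty block of $\|\bm{v}\|_{2,h}$. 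Summing over faces and elements with Cauchy--Schwarz, and using the uniformly bounded number of faces per element from shape-regularity, yields $|B_2|\le C\eta_{\min}^{-1/2}\|\bm{w}\|_{2,h}\|\bm{v}\|_{2,h}$; $B_3$ is identical with the roles of $w$ and $v$ exchanged, and boundedness follows.

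For coercivity I would first observe that setting $\bm{w}=\bm{v}_h$ makes the two flux terms coincide, so $B_h(\bm{v}_h,\bm{v}_h)=B_1+2B_2+B_4$. The elliptic condition gives $B_1\ge\lambda_{\min}\sum_K|v_h|_{H^1(K)}^2$ and $B_4$ is exactly the penalty part of $\|\bm{v}_h\|_{1,h}^2$, so the task is to absorb $2B_2$. Here I would exploit $v_h|_K\in P_k(K)$: applying the inverse inequality \eqref{eq:iv} to each $\partial_i v_h$ gives $h_K^2|v_h|_{H^2(K)}^2\le C_{\mathrm{IV}}^2|v_h|_{H^1(K)}^2$, so the trace estimate above collapses to $\|\nabla v_h\|_{L^2(e)}\le C h_e^{-1/2}|v_h|_{H^1(K)}$ with $C$ of order $\alpha\sqrt{C_{0,\mathrm{T}}(1+C_{\mathrm{IV}}^2)}$. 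Young's inequality with a parameter $\ep$ then splits $|2B_2|$ into an $\ep$-multiple of $\sum_K|v_h|_{H^1(K)}^2$ and an $\ep^{-1}\eta_{\min}^{-1/2}$-multiple of the penalty block.

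Choosing $\ep$ proportional to $\lambda_{\min}$ so that at most half of $B_1$ is consumed, and then requiring $\eta_{\min}\ge\eta^*$ with $\eta^*$ of order $\alpha^2C_{0,\mathrm{T}}(1+C_{\mathrm{IV}}^2)/\lambda_{\min}$ so that at most half of $B_4$ is consumed, leaves $B_h(\bm{v}_h,\bm{v}_h)\ge\tfrac12\min\{\lambda_{\min},1\}\,\|\bm{v}_h\|_{1,h}^2$; the stated estimate in the $\|\cdot\|_{2,h}$ norm then follows from the norm equivalence \eqref{eq:eqv}, which is where the dependence of $C_{\mathrm{c}}$ on $C_{\mathrm{IV}}$ enters. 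I expect the main obstacle throughout to be the bookkeeping of the $h_e$- and $\eta_e$-scalings in $B_2,B_3$: one must pair the trace factor $h_e^{-1/2}$ with the jump so that the weights $\eta_e/h_e$ in the norms appear \emph{exactly}, and for coercivity one must track precisely how $\eta_{\min}$ enters the constants in order to pin down the threshold $\eta^*$.
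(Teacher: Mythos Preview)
Your proposal is correct and follows essentially the same route as the paper: estimate $B_1,B_4$ directly, control $B_2,B_3$ by Cauchy--Schwarz plus the trace inequality (pairing the $h_e^{-1/2}$ factor with the jump via $\eta_e^{-1/2}(\eta_e/h_e)^{1/2}$), and for coercivity collapse the trace bound via the inverse inequality, apply Young, choose $\eta_{\min}$ large, and finish with the norm equivalence \eqref{eq:eqv}. One small slip: after Young the coefficient on the penalty block scales like $\ep^{-1}\eta_{\min}^{-1}$, not $\ep^{-1}\eta_{\min}^{-1/2}$, but your stated order for $\eta^*$ is consistent with the correct power and the argument goes through unchanged.
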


Both inequalities are essentially well-known; however, we briefly state
 their proofs, since the contribution of parameters on $C_{\mathrm{c}}$
 and $C_{\mathrm{b}}$ should be clarified. Moreover, we shall state the
 extension of \eqref{eq:bdd} below (see Lemma \ref{la:7a}) so it is useful to
 recall the proof of \eqref{eq:bdd} at this stage.

 \begin{proof}[Proof of Lemma \ref{la:7}] 
{(Boundness)} Let $\bm{w}=(w,\hat{w}),\bm{v}=(v,\hat{v})\in
 \bm{V}^2(h)$.
For $e\subset \partial K$, $K\in\mathcal{T}_h$, we
 have by Schwarz' inequality 
\begin{multline*}
\left|
 \int_{e}(A\nabla w\cdot
 n_K)({v}-\hat{v})~dS\right|
 \le \alpha \left(\frac{h_e}{\eta_e}\right)^{1/2}\|\nabla
 w\|_{L^2(e)}\cdot \left(\frac{\eta_e}{h_e}\right)^{1/2}\|{v}-\hat{v}\|_{L^2(e)}.
\end{multline*}
 Hence, using Schwarz' inequality again,  
\begin{align*}
B_h(\bm{w},\bm{v})&\le \sum_{K\in \mathcal{T}}\alpha
 |w|_{H^1(K)}|v|_{H^1(K)}\\
 &+\sum_{K\in\mathcal{T}_h}\sum_{e\subset \partial K}
 \frac{\alpha}{\eta_{\min}^{1/2}}h_e^{1/2} \|\nabla w\|_{L^2(e)}\cdot
 \left(\frac{\eta_e}{h_e}\right)^{1/2}\|{v}-\hat{v}\|_{L^2(e)}\\
 &+\sum_{K\in\mathcal{T}_h}\sum_{e\subset \partial K}
 \frac{\alpha}{\eta_{\min}^{1/2}}h_e^{1/2} \|\nabla v\|_{L^2(e)}\cdot
 \left(\frac{\eta_e}{h_e}\right)^{1/2}\|{w}-\hat{w}\|_{L^2(e)}\\
 &+\sum_{K\in\mathcal{T}_h}\sum_{e\subset \partial K}
 {\frac{\eta_e}{h_e}}\left\|{w}-\hat{w}\right\|_{L^2(e)}\cdot 
 {\frac{\eta_e}{h_e}}\left\|{v}-\hat{v}\right\|_{L^2(e)}\\
& \le C\left[\sum_{K\in \mathcal{T}_h} |w|_{H^1(K)}^2
 +\sum_{e\subset \partial K}\left(h_e^{-1}\|\nabla
 w\|_{L^2(e)}^2
 +{\frac{\eta_e}{h_e}}\left\|{w}-\hat{w}\right\|_{L^2(e)}^2\right)\right]^{1/2}\\
& \cdot \left[\sum_{K\in \mathcal{T}_h} |v|_{H^1(K)}^2
 +\sum_{e\subset \partial K}\left(h_e^{-1}\|\nabla v\|_{L^2(e)}^2
 +
 {\frac{\eta_e}{h_e}}\left\|{v}-\hat{v}\right\|_{L^2(e)}^2\right)\right]^{1/2}.
\end{align*}
Therefore, using \eqref{eq:tr2}, we obtain \eqref{eq:bdd}. 
 
 \smallskip
 
 \noindent {(Coercivity)}  Let $\bm{v}_h=(v_h,\hat{v}_h)\in
 \bm{V}_h$. Then, 
\begin{multline*}
B_h(\bm{v}_h,\bm{v}_h)
\ge  \lambda_{\min}\sum_{K\in\mathcal{T}_h}|v_h|_{H^1(K)}^2\\
 +\sum_{K\in\mathcal{T}_h}\sum_{e\subset \partial K}
{\frac{\eta_e}{h_e}}\left\|\hat{v}-v\right\|_{L^2(e)}^2-2\sum_{K\in\mathcal{T}_h}\int_{\partial K}(A\nabla v_h\cdot n_K)({v}_h-\hat{v}_h)~dS.
\end{multline*} 
Letting $e\subset \partial K$, $K\in\mathcal{T}_h$, we
 have by \eqref{eq:tr2}, \eqref{eq:iv}, Schwarz' and Young's inequalities 
\begin{align*}
& \left|
 \int_{e}(A\nabla v_h\cdot
 n_K)({v}_h-\hat{v}_h)~dS\right|\\
 &\le \alpha \|\nabla v_h\|_{L^2(e)}\|{v}_h-\hat{v}_h\|_{L^2(e)}\\
 &\le \alpha
 C_{1,\mathrm{T}}h_e^{-1/2}\left(|v_h|_{H^1(K)}^2+h_K^2|v_h|_{H^2(K)}^2\right)^{1/2}\cdot
\|{v}_h-\hat{v}_h\|_{L^2(e)}\\
 &\le 
 C_*(\delta\eta_e)^{-1/2}|v_h|_{H^1(K)}\cdot
\left(\frac{\eta_e\delta}{h_e}\right)^{1/2}\|{v}_h-\hat{v}_h\|_{L^2(e)}\\
 &\le 
 \frac{C_*^2}{\delta\eta_e}|v_h|^2_{H^1(K)}+
\delta{\frac{\eta_e}{h_e}}\left\|{v}_h-\hat{v}_h\right\|_{L^2(e)}^2,
\end{align*}
where $C_*=C_*(\alpha,d,C_{1,\mathrm{T}},C_{\mathrm{IV}})$ and $\delta$ is a positive constant specified later.  
 Using this, we deduce 
 \begin{multline*}
   B_h(\bm{v}_h,\bm{v}_h)\ge
 \left[\lambda_{\min}-2(d+1)\frac{C_*^2}{\delta\eta_{\min}}\right]
  \sum_{K\in\mathcal{T}_h}|v_h|_{H^1(K)}^2\\
  +(1-2\delta)\sum_{K\in\mathcal{T}_h}\sum_{e\subset \partial K}
{\frac{\eta_e}{h_e}}\left\|\hat{v}-v\right\|_{L^2(e)}^2.
 \end{multline*}
At this stage, choosing $\delta$ and $\eta_{\min}$ such that 
\begin{equation*}
0<  \delta\le \frac14,\qquad \eta_{\min}\ge 4(d+1)\frac{C_*^2}{\lambda_{\min}\delta},
\end{equation*}
 we obtain
\[
 B_h(\bm{v}_h,\bm{v}_h)\ge \frac{1}{2}\min\{1,~\lambda_{\min}\}\|\bm{v}_h\|_{1,h}^2,
\]
which, together with \eqref{eq:eqv}, implies \eqref{eq:coer}. 
 \end{proof}


\section{Error analysis}
\label{sec:4}

This section is devoted to error analysis of our HDG scheme. 
We use a new HDG norm:
\begin{equation} 
 \label{eq:31}
  \|\bm{v}\|_{1+s,h}^2 =
  \sum_{K\in\mathcal{T}_h}|v|_{H^1(K)}^2+
  \sum_{K\in\mathcal{T}_h}h_K^{2s}|v|_{H^{1+s}(K)}^2+
  \sum_{K\in\mathcal{T}_h}\sum_{e\subset\partial K}
 {\frac{\eta_e}{h_e}}\left\|\hat{v}-v\right\|_{L^2(e)}^2
\end{equation}
for $s\in (1/2,1)$.

We have to improve Lemmas \ref{la:11} and \ref{la:7} for our purpose. 
First, the trace inequality for functions of $H^{1+s}(K)$ is given as
follows; the proof will be stated in Appendix
\ref{sec:ap}. 

\begin{lemma}
\label{la:11a}
(Trace inequality) Let $s\in (1/2,1)$. For $K\in \mathcal{T}_h$, we have
 \begin{equation}
	  \|\nabla v\|_{L^2(e)}^2 \le
    C_{1+s,\mathrm{T}}h_e^{-1}\left(|v|_{H^1(K)}^2+h_K^{2s}|v|_{H^{1+s}(K)}^2\right)\quad
    (v\in
    H^{1+s}(K)).\label{eq:tr2a}  
 \end{equation}
\end{lemma}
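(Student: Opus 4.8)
The plan is to reduce \eqref{eq:tr2a} to a fractional trace inequality for scalar functions, applied componentwise to $\nabla v$, and then to prove that scalar inequality by a scaling argument to a reference element. First I would observe that, since $v\in H^{1+s}(K)$, each partial derivative $w_i=\partial_i v$ belongs to $H^s(K)$, and by the very definition \eqref{eq:33b} one has $|v|_{H^{1+s}(K)}^2=\sum_{i=1}^d|w_i|_{H^s(K)}^2$, while $|v|_{H^1(K)}^2=\sum_{i=1}^d\|w_i\|_{L^2(K)}^2$ and $\|\nabla v\|_{L^2(e)}^2=\sum_{i=1}^d\|w_i\|_{L^2(e)}^2$. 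Hence \eqref{eq:tr2a} follows by summing over $i=1,\dots,d$ from the scalar estimate
\begin{equation}
\label{eq:Hs-trace}
\|w\|_{L^2(e)}^2\le C\,h_e^{-1}\left(\|w\|_{L^2(K)}^2+h_K^{2s}|w|_{H^s(K)}^2\right)\qquad(w\in H^s(K)),
\end{equation}
which is what I would really establish.

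To prove \eqref{eq:Hs-trace}, I would pass to a fixed reference simplex $\hat K$ through the affine map $F_K(\hat x)=B_K\hat x+b_K$ with $F_K(\hat K)=K$, writing $\hat w=w\circ F_K$ and letting $\hat e\subset\partial\hat K$ be the preimage of $e$. On $\hat K$ the classical fractional trace embedding $H^s(\hat K)\hookrightarrow L^2(\partial\hat K)$, \emph{valid precisely because $s>1/2$}, gives $\|\hat w\|_{L^2(\hat e)}^2\le \hat C\big(\|\hat w\|_{L^2(\hat K)}^2+|\hat w|_{H^s(\hat K)}^2\big)$ with $\hat C=\hat C(s,d)$. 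It then remains to transfer each term back to $K$ and to exploit shape regularity: together with (H3) this yields $h_e\sim h_K$, $\|B_K\|\sim h_K$, $\|B_K^{-1}\|\sim h_K^{-1}$ and $|\det B_K|\sim h_K^d$, all hidden constants depending only on $\nu_1$ and $d$. The elementary scalings $\|w\|_{L^2(K)}^2\sim h_K^{d}\|\hat w\|_{L^2(\hat K)}^2$ and $\|w\|_{L^2(e)}^2\sim h_K^{d-1}\|\hat w\|_{L^2(\hat e)}^2$ are immediate; combined with the reference embedding and the seminorm scaling below they produce \eqref{eq:Hs-trace}.

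The step I expect to be the main obstacle is the behaviour of the Gagliardo seminorm under $F_K$. Changing variables $x=F_K(\hat x)$, $y=F_K(\hat y)$ in
\[
|w|_{H^s(K)}^2=\int\!\!\int_{K\times K}\frac{|w(x)-w(y)|^2}{|x-y|^{d+2s}}\,dx\,dy
\]
produces the factor $|\det B_K|^2$ from $dx\,dy$ together with the kernel $|B_K(\hat x-\hat y)|^{-(d+2s)}$, which is \emph{not} exactly $h_K^{-(d+2s)}|\hat x-\hat y|^{-(d+2s)}$. To control it I would use the two-sided bound $\|B_K^{-1}\|^{-1}|\hat x-\hat y|\le|B_K(\hat x-\hat y)|\le\|B_K\|\,|\hat x-\hat y|$, so that shape regularity bounds the condition number of $B_K$ and gives $|B_K(\hat x-\hat y)|\sim h_K|\hat x-\hat y|$ with constants depending only on $\nu_1$. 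This yields $|w|_{H^s(K)}^2\sim h_K^{d-2s}|\hat w|_{H^s(\hat K)}^2$, equivalently $|\hat w|_{H^s(\hat K)}^2\sim h_K^{2s-d}|w|_{H^s(K)}^2$. Substituting the three scalings into $\|w\|_{L^2(e)}^2\le C h_K^{d-1}\hat C\big(\|\hat w\|_{L^2(\hat K)}^2+|\hat w|_{H^s(\hat K)}^2\big)$ gives $\|w\|_{L^2(e)}^2\le C\big(h_K^{-1}\|w\|_{L^2(K)}^2+h_K^{2s-1}|w|_{H^s(K)}^2\big)$, and using $h_e\sim h_K$ once more converts the prefactor into $h_e^{-1}$, establishing \eqref{eq:Hs-trace} and hence \eqref{eq:tr2a} with $C_{1+s,\mathrm{T}}=C_{1+s,\mathrm{T}}(s,d,\nu_1)$.
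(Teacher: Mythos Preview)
Your proposal is correct and follows essentially the same route as the paper's proof in the appendix: reduce \eqref{eq:tr2a} to the scalar $H^s$ trace estimate \eqref{eq:Hs-trace} by applying it to each $\partial_i v$, then prove that scalar estimate by pulling back to a reference simplex, invoking the trace embedding $H^s(\hat K)\hookrightarrow L^2(\partial\hat K)$ there, and scaling each term (including the Gagliardo seminorm) back via the affine map and shape regularity. Your identification of the Gagliardo seminorm scaling as the main technical point, and your handling of it via the two-sided bound $\|B_K^{-1}\|^{-1}|\hat x-\hat y|\le|B_K(\hat x-\hat y)|\le\|B_K\|\,|\hat x-\hat y|$, matches the paper's computation exactly.
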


Moreover, we deduce the
following lemma in exactly the same way as the proof of Lemma
\ref{la:7} using \eqref{eq:tr2a} instead of \eqref{eq:tr2}.  

\begin{lemma}
 \label{la:7a}
 Let $s,t\in (1/2,1]$. 
 For any $\eta_{\min}>0$, there exists a positive constant
 $C_{\mathrm{b},s,t}=C_{\mathrm{b},s,t}(\alpha,\eta_{\min},d,C_{1+s,\mathrm{T}},C_{1+t,\mathrm{T}},s,t)$ such that
       \begin{equation}
	\label{eq:bdd2}
	 B_h(\bm{w},\bm{v})\le C_{\mathrm{b},s,t}\|\bm{w}\|_{1+s,h}\|\bm{v}\|_{1+t,h}\qquad
	 (\bm{w}\in \bm{V}^{1+s}(h),\ 
	 \bm{v}\in \bm{V}^{1+t}(h)).
       \end{equation}
\end{lemma}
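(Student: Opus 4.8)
The plan is to repeat the boundedness argument of Lemma~\ref{la:7} almost verbatim, replacing the integer-order trace inequality \eqref{eq:tr2} by its fractional counterpart \eqref{eq:tr2a} and keeping careful track of which exponent, $s$ or $t$, is attached to which argument. Write $B_h(\bm{w},\bm{v})=B_1+B_2+B_3+B_4$ as in \eqref{eq:10b}, with $\bm{w}=(w,\hat{w})\in\bm{V}^{1+s}(h)$ and $\bm{v}=(v,\hat{v})\in\bm{V}^{1+t}(h)$. The volume term $B_1$ is handled exactly as before: $|B_1|\le\alpha\sum_{K}|w|_{H^1(K)}|v|_{H^1(K)}$, which by Cauchy--Schwarz over the elements is controlled by the $H^1$-seminorm parts of $\|\bm{w}\|_{1+s,h}$ and $\|\bm{v}\|_{1+t,h}$. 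The penalty term $B_4$ is likewise bounded directly by the product of the jump parts $\sum_{e}\frac{\eta_e}{h_e}\|w-\hat{w}\|_{L^2(e)}^2$ and $\sum_{e}\frac{\eta_e}{h_e}\|v-\hat{v}\|_{L^2(e)}^2$ through one more application of Cauchy--Schwarz.

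The substance is in the two interface terms $B_2$ and $B_3$. For a single face $e\subset\partial K$ I would factor, exactly as in Lemma~\ref{la:7}, $|\int_e (A\nabla w\cdot n_K)(v-\hat{v})\,dS|\le \alpha (h_e/\eta_e)^{1/2}\|\nabla w\|_{L^2(e)}\cdot(\eta_e/h_e)^{1/2}\|v-\hat{v}\|_{L^2(e)}$, so that the second factor is absorbed into the jump part of $\|\bm{v}\|_{1+t,h}$ while the first factor $h_e^{1/2}\|\nabla w\|_{L^2(e)}$ must be estimated by a trace inequality. This is the one and only change: instead of \eqref{eq:tr2} I apply \eqref{eq:tr2a} \emph{with exponent $s$} to $\|\nabla w\|_{L^2(e)}$, obtaining $h_e\|\nabla w\|_{L^2(e)}^2\le C_{1+s,\mathrm{T}}\big(|w|_{H^1(K)}^2+h_K^{2s}|w|_{H^{1+s}(K)}^2\big)$, the prefactor $h_e$ cancelling the $h_e^{-1}$ in \eqref{eq:tr2a}. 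The right-hand side is precisely the $K$-local contribution to $\|\bm{w}\|_{1+s,h}^2$ appearing in \eqref{eq:31}. The term $B_3$ is symmetric: the gradient now falls on $v$, so I apply \eqref{eq:tr2a} \emph{with exponent $t$} to $\|\nabla v\|_{L^2(e)}$ and absorb the jump $\|w-\hat{w}\|_{L^2(e)}$ into the penalty part of $\|\bm{w}\|_{1+s,h}$.

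Collecting the four estimates and using Cauchy--Schwarz over the elements and over their faces one last time yields $B_h(\bm{w},\bm{v})\le C_{\mathrm{b},s,t}\|\bm{w}\|_{1+s,h}\|\bm{v}\|_{1+t,h}$, with the constant depending on $\alpha,\eta_{\min},d,C_{1+s,\mathrm{T}},C_{1+t,\mathrm{T}}$ and, through the trace constants, on $s$ and $t$. I do not expect a genuine obstacle here; the only care point is bookkeeping. I must attach the exponent $s$ to every gradient of $w$ and the exponent $t$ to every gradient of $v$, so that the final bound factors as a product of the two \emph{distinct} norms rather than mixing them. One further remark is needed at the endpoints: Lemma~\ref{la:11a} is stated only for $s\in(1/2,1)$, whereas Lemma~\ref{la:7a} allows $s,t\in(1/2,1]$; when $s=1$ (respectively $t=1$) I simply fall back on the standard trace inequality \eqref{eq:tr2} of Lemma~\ref{la:11}, noting that $H^{2}(K)=H^{1+1}(K)$ so that $\|\cdot\|_{2,h}$ coincides with $\|\cdot\|_{1+1,h}$ and the estimate reduces to \eqref{eq:bdd}.
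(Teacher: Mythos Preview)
Your proposal is correct and follows exactly the approach indicated in the paper, which merely states that Lemma~\ref{la:7a} is obtained ``in exactly the same way as the proof of Lemma~\ref{la:7} using \eqref{eq:tr2a} instead of \eqref{eq:tr2}.'' Your additional remark about the endpoint cases $s=1$ or $t=1$ falling back on \eqref{eq:tr2} is a helpful clarification that the paper leaves implicit.
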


\begin{thm}
 \label{th:1}
 Let $u\in V$ be the solution of \eqref{eq:2} and assume that
 \eqref{eq:reg} for some $s\in (1/2,1]$. Set $\bm{u}\in
 \bm{V}^{1+s}(h)$ as in Lemma \ref{la:3}. Moreover, let $\bm{u}_h=(u_h,\hat{u}_h)\in
 \bm{V}_h$ be the solution of \eqref{eq:10}. Then, we have the Galerkin
 orthogonality
 \begin{equation}
   B_h(\bm{u}-\bm{u}_h,\bm{v}_h)=0\qquad (\forall \bm{v}_h\in\bm{V}_h).
 \label{eq:28}
 \end{equation}
 Moreover,
 \begin{equation}
  \label{eq:29}
   \|\bm{u}-\bm{u}_h\|_{1+s,h}\le C\inf_{\bm{v}_h\in\bm{V}_h}\|\bm{u}-\bm{v}_h\|_{1+s,h}.
 \end{equation}
\end{thm}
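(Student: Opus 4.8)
The plan is to prove the two assertions in turn, establishing the Galerkin orthogonality \eqref{eq:28} first and then bootstrapping it into the quasi-optimality bound \eqref{eq:29} through a C\'ea-type argument.

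For \eqref{eq:28} I would simply combine the consistency result with the definition of the scheme. By Lemma \ref{la:3}, the pair $\bm{u}=(u,\hat{u})$ satisfies $B_h(\bm{u},\bm{v}_h)=L_h(\bm{v}_h)$ for every $\bm{v}_h\in\bm{V}_h$, while by \eqref{eq:10a} the discrete solution satisfies $B_h(\bm{u}_h,\bm{v}_h)=L_h(\bm{v}_h)$ for every $\bm{v}_h\in\bm{V}_h$. Since both identities share the same right-hand side and $B_h(\cdot,\bm{v}_h)$ is linear in its first slot, subtracting them yields $B_h(\bm{u}-\bm{u}_h,\bm{v}_h)=0$ for all $\bm{v}_h\in\bm{V}_h$. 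Here one uses that $\bm{u}\in\bm{V}^{1+s}(h)$ (guaranteed by \eqref{eq:reg} together with (H4), which places each element inside a single subdomain) and $\bm{u}_h\in\bm{V}_h\subset\bm{V}^{1+s}(h)$, so that $B_h$ is well defined on the arguments in question.

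For \eqref{eq:29} I would run the standard quasi-optimality argument adapted to the two HDG norms. Fix an arbitrary $\bm{v}_h\in\bm{V}_h$ and set $\bm{w}_h=\bm{u}_h-\bm{v}_h\in\bm{V}_h$. Coercivity (Lemma \ref{la:7}) gives $C_{\mathrm{c}}\|\bm{w}_h\|_{2,h}^2\le B_h(\bm{w}_h,\bm{w}_h)$. Writing $\bm{w}_h=(\bm{u}-\bm{v}_h)-(\bm{u}-\bm{u}_h)$ in the first slot and invoking \eqref{eq:28} to annihilate the term $B_h(\bm{u}-\bm{u}_h,\bm{w}_h)$, I obtain $B_h(\bm{w}_h,\bm{w}_h)=B_h(\bm{u}-\bm{v}_h,\bm{w}_h)$. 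Since $\bm{u}-\bm{v}_h\in\bm{V}^{1+s}(h)$ and $\bm{w}_h\in\bm{V}_h\subset\bm{V}^{2}(h)=\bm{V}^{1+1}(h)$, I apply the boundedness Lemma \ref{la:7a} with exponents $(s,t)=(s,1)$, noting that $\|\cdot\|_{1+1,h}=\|\cdot\|_{2,h}$ by \eqref{eq:21b} and \eqref{eq:31}; this gives $B_h(\bm{u}-\bm{v}_h,\bm{w}_h)\le C_{\mathrm{b},s,1}\|\bm{u}-\bm{v}_h\|_{1+s,h}\|\bm{w}_h\|_{2,h}$. Dividing by $\|\bm{w}_h\|_{2,h}$ (the case $\bm{w}_h=0$ being trivial) leaves $\|\bm{w}_h\|_{2,h}\le (C_{\mathrm{b},s,1}/C_{\mathrm{c}})\|\bm{u}-\bm{v}_h\|_{1+s,h}$.

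To finish, I would pass back to the $\|\cdot\|_{1+s,h}$ norm via the triangle inequality $\|\bm{u}-\bm{u}_h\|_{1+s,h}\le\|\bm{u}-\bm{v}_h\|_{1+s,h}+\|\bm{w}_h\|_{1+s,h}$, bound $\|\bm{w}_h\|_{1+s,h}\le C\|\bm{w}_h\|_{2,h}$, and take the infimum over $\bm{v}_h\in\bm{V}_h$ to reach \eqref{eq:29}. The one ingredient requiring care — and the main, though mild, obstacle — is this last comparison $\|\bm{w}_h\|_{1+s,h}\le C\|\bm{w}_h\|_{2,h}$ on $\bm{V}_h$, which is not literally among the stated inequalities. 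It reduces to the fractional inverse estimate $h_K^{s}|w_h|_{H^{1+s}(K)}\le C|w_h|_{H^1(K)}$ for $w_h\in P_k(K)$; this follows from a scaling argument to a reference element (under which $h_K^{s}|w_h|_{H^{1+s}(K)}$ and $|w_h|_{H^1(K)}$ scale with the same power of $h_K$) together with the equivalence of norms on the finite-dimensional space $P_k(\hat{K})$. In effect it extends Remark \ref{rem:20}, making $\|\cdot\|_{1,h}$, $\|\cdot\|_{1+s,h}$ and $\|\cdot\|_{2,h}$ mutually equivalent on $\bm{V}_h$ with $h$-independent constants, and yields a constant $C=C(C_{\mathrm{c}},C_{\mathrm{b},s,1},C_{\mathrm{IV}})$ in \eqref{eq:29}.
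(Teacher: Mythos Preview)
Your argument is correct and follows the same route as the paper: Galerkin orthogonality from consistency plus the scheme, then a C\'ea-type chain using coercivity \eqref{eq:coer}, the orthogonality \eqref{eq:28}, and the mixed boundedness \eqref{eq:bdd2} with $(s,1)$, followed by the triangle inequality. You are in fact more explicit than the paper on the one delicate point: the comparison $\|\bm{w}_h\|_{1+s,h}\le C\|\bm{w}_h\|_{2,h}$ on $\bm{V}_h$, which the paper invokes silently (writing the triangle inequality with an unexplained constant $C$ in front of $\|\bm{v}_h-\bm{u}_h\|_{2,h}$), and which you correctly identify as a fractional inverse estimate obtainable by scaling.
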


\begin{proof}
 Let $\bm{v}_h\in V_h$ be arbitrarily. Then, \eqref{eq:28} is a
 consequence of \eqref{eq:10} and Lemma \ref{la:3}. On the other hand, 
\begin{align*}
C_{\mathrm{c}} \|\bm{v}_h-\bm{u}_h\|_{2,h}^2
 &\le B_h(\bm{v}_h-\bm{u}_h,\bm{v}_h-\bm{u}_h) & (\mbox{by }\eqref{eq:coer})\\
&\le B_h(\bm{v}_h-\bm{u},\bm{v}_h-\bm{u}_h)+B_h(\bm{u}-\bm{u}_h,\bm{v}_h-\bm{u}_h) & \\
&\le
 B_h(\bm{v}_h-\bm{u},\bm{v}_h-\bm{u}_h)
 &(\mbox{by \eqref{eq:28}}) \\
&\le
 C_{\mathrm{b},s,1}\|\bm{v}_h-\bm{u}\|_{1+s,h}\|\bm{v}_h-\bm{u}_h\|_{2,h}
 &(\mbox{by \eqref{eq:bdd2}}) 
\end{align*}
This implies
 \[
 \|\bm{v}_h-\bm{u}_h\|_{2,h}
 \le
 \frac{C_{\mathrm{b},s,1}}{C_{\mathrm{c}}}\|\bm{v}_h-\bm{u}\|_{1+s,h}.
 \]
We apply the triangle inequality to obtain
\begin{align*}
 \|\bm{u}-\bm{u}_h\|_{1+s,h}
 &\le 
 \|\bm{u}-\bm{v}_h\|_{1+s,h}+
 C\|\bm{v}_h-\bm{u}_h\|_{2,h}\\
 &\le 
 \|\bm{u}-\bm{v}_h\|_{1+s,h}+
 C\|\bm{v}_h-\bm{u}\|_{1+s,h},
\end{align*}
which gives \eqref{eq:29}.  
\end{proof}

\begin{thm}
 \label{th:2}
 Under the same assumptions of Theorem \ref{th:1}, we have  
  \begin{equation}
   \|\bm{u}-\bm{u}_h\|_{1+s,h}\le
Ch^s\left(\|u\|_{H^{1+s}(\Omega_1)}+\|u\|_{H^{1+s}(\Omega_2)}\right).
    \label{eq:40}
  \end{equation}
\end{thm}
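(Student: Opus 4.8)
The plan is to combine the quasi-optimal estimate \eqref{eq:29} of Theorem \ref{th:1} with a single concrete interpolation error bound. By \eqref{eq:29} it suffices to exhibit one $\bm{v}_h=(v_h,\hat{v}_h)\in\bm{V}_h$ for which $\|\bm{u}-\bm{v}_h\|_{1+s,h}\le Ch^sN_s(u)$; the theorem then follows at once from $\|\bm{u}-\bm{u}_h\|_{1+s,h}\le C\inf_{\bm{v}_h}\|\bm{u}-\bm{v}_h\|_{1+s,h}$.

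First I would construct the interpolant so as to respect the interface. Because the mesh satisfies (H4), each $\mathcal{T}_{h,i}$ is a genuine triangulation of $\Omega_i$ and $u_i\in H^{1+s}(\Omega_i)$; hence I would take $v_h$ to be a Scott--Zhang (Cl\'ement-type) quasi-interpolant of $u_i$ built \emph{separately} on each subdomain, so that no averaging patch crosses $\Gamma$. Such an operator enjoys, for $s\in(1/2,1)$, the fractional approximation bounds $\|u-v_h\|_{L^2(K)}\le Ch_K^{1+s}|u|_{H^{1+s}(S_K)}$ and $|u-v_h|_{H^1(K)}\le Ch_K^{s}|u|_{H^{1+s}(S_K)}$, together with the seminorm stability $|v_h|_{H^{1+s}(K)}\le C|u|_{H^{1+s}(S_K)}$, where $S_K$ denotes the bounded-overlap patch around $K$ inside the same subdomain. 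For the skeleton component I would set $\hat{v}_h$ equal to the $L^2(e)$-projection of $\hat{u}|_e$ onto $P_l(e)$ for each $e\in\mathcal{E}_{h,0}\cup\mathcal{E}_{h,\Gamma}$ and $\hat{v}_h=0$ on $\mathcal{E}_{h,\partial\Omega}$, which indeed lands in $\hat{V}_h$.

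Next I would estimate the three contributions to $\|\bm{u}-\bm{v}_h\|_{1+s,h}^2$ in \eqref{eq:31}. The $H^1$-seminorm term is controlled directly by the approximation bound above, giving $\sum_K|u-v_h|_{H^1(K)}^2\le Ch^{2s}N_s(u)^2$ after summing over the finitely-overlapping patches. For the fractional term I would use $|u-v_h|_{H^{1+s}(K)}\le |u|_{H^{1+s}(K)}+|v_h|_{H^{1+s}(K)}$ followed by the seminorm stability, so that $\sum_Kh_K^{2s}|u-v_h|_{H^{1+s}(K)}^2\le Ch^{2s}N_s(u)^2$. For the jump term I would first apply the trace inequality \eqref{eq:tr1} (and Lemma \ref{la:11a} where needed) to $u-v_h$, which yields $\frac{\eta_e}{h_e}\|u-v_h\|_{L^2(e)}^2\le Ch_K^{2s}|u|_{H^{1+s}(K)}^2$ once $h_e$ is exchanged for $h_K$ via (H3) and $\eta_e$ is bounded by \eqref{eq:10j}. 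Writing the jump integrand as $(\hat{u}-u)-(\hat{v}_h-v_h)$, it then remains to bound $\|\hat{u}-\hat{v}_h\|_{L^2(e)}$. Here I would use that $u$ is continuous across every $e\in\mathcal{E}_{h,0}$ (both adjacent elements lie in the same $\Omega_i$), so that $\hat{u}=u|_e$ there and the edge-projection error is again of order $h_K^{s+1/2}$; on $\mathcal{E}_{h,\partial\Omega}$ one has $\hat{u}-u=0$; and on the interface edges $\mathcal{E}_{h,\Gamma}$ one has $\hat{u}-u=\pm\tfrac12 g_D$, whose edge-projection error is handled through the regularity $g_D\in H^{s+1/2}_0(\Gamma)$ from (H2$'$) together with the traces of $u_i$ on $\Gamma$. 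Summing all edge contributions gives $Ch^{2s}N_s(u)^2$, and combining the three terms completes the bound.

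The main obstacle is the fractional term. A naive route---bounding $|v_h|_{H^{1+s}(K)}$ by an inverse estimate of the form $Ch_K^{-s}|v_h|_{H^1(K)}$---fails, because it produces an $\|u\|_{H^1}$ contribution that is \emph{not} accompanied by a factor $h^{2s}$ and so destroys the rate. What is genuinely needed is the $h$-uniform stability of the quasi-interpolant in the fractional seminorm $|\cdot|_{H^{1+s}(K)}$, which is the technical heart of the argument; establishing this, and checking the compatibility of the subdomain-wise construction with the interface-aligned mesh near $\Gamma$, is where the real work lies. The remaining estimates are routine scaling arguments once Lemma \ref{la:11a} and the trace inequalities are in hand.
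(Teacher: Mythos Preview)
Your overall strategy (quasi-optimality from Theorem~\ref{th:1} plus a single interpolation bound) is the paper's strategy too, and your argument is correct, but the paper makes two simplifying choices that sidestep the issues you single out. First, since $s>1/2$ guarantees $H^{1+s}(K)\hookrightarrow C(\overline K)$ in $d\le 3$, the paper uses the \emph{Lagrange} interpolant $u_I$ elementwise rather than a Scott--Zhang operator; this is purely local (no patches, no interface compatibility to check) and the fractional term is handled by invoking the interpolation estimate $|u-u_I|_{H^{t}(K)}\le Ch_K^{1+s-t}|u|_{H^{1+s}(K)}$ at $t=1+s$, which gives $h_K^{2s}|u-u_I|_{H^{1+s}(K)}^2\le Ch_K^{2s}|u|_{H^{1+s}(K)}^2$ directly. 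This is of course equivalent to the ``$H^{1+s}$-seminorm stability'' you flag as the technical heart, so the obstacle you describe is not avoided so much as dispatched by citation. Second, the paper sets $\hat{u}_I|_e$ to be the two-sided average of the traces of $u_I$, mirroring the definition of $\hat u$; then $(\hat u-\hat u_I)|_e$ is simply the average of $(u-u_I)$ from the two adjacent elements, and the jump term reduces immediately to the trace inequality \eqref{eq:tr1} applied to $u-u_I$, with no separate appeal to $g_D$ or the interface regularity (H2$'$). Your $L^2(e)$-projection route for $\hat v_h$ works, but it is a detour here.
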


\begin{proof}
 It is done by the standard method; see \cite[Paragraph 4.3]{abcm02} for
 example. However, we state the proof, since it is not apparent how to
 estimate the third term of the left-hand side of \eqref{eq:21b}.
 First, we introduce $u_I\in V_h$ as follows. 
 Let
 $K\in\mathcal{T}_h$ and let $u_{I,K}=(u_I)|_K\in P_{k}(K)$ be the Lagrange
 interpolation of $u|_K$. We remark here that $u_I$ is well-defined, since
 $u|_K\in H^{1+s}(K)$. Further, we introduce $\hat{u}_I\in \hat{V}_h$ by setting
 $\hat{u}_I|_e=(u_{I,K_1}|_e+u_{I,K_2}|_e)/2$ for
 $e\in\mathcal{E}_{h,0}\cup\mathcal{E}_{h,\Gamma}$, $e=\partial
 K_1\cap\partial K_2$ and $\hat{u}_I|_e=u_{I,K}|_e$ for
 $e\in\mathcal{E}_{h,\partial\Omega}$, $e\subset\partial K$. 
 Then, letting $\bm{w}_h=(u_I,\hat{u}_I)\in \bm{V}_h$, we derive an
 estimation for $\|\bm{u}-\bm{w}_h\|_{1+s,h}$.

For $e\in\mathcal{E}_{h,0}\cup\mathcal{E}_{h,\Gamma}$, $e\subset \partial
 K$, we have by \eqref{eq:tr1}
 \begin{equation*}
  {\frac{\eta_e}{h_e}}
 \left\|u-u_I\right\|_{L^2(e)}^2
\le Ch_e^{-2}\left(\|u-u_I\|_{L^2(K)}^2+h_K^{2}|u-u_I|_{H^1(K)}^2\right)
 \end{equation*}
 Hence, using (H3), 
  \begin{equation*}
 \sum_{K\in\mathcal{T}_h}\sum_{e\subset\partial K}
 {\frac{\eta_e}{h_e}}
 \left\|u-u_I\right\|_{L^2(e)}^2
 \le C\sum_{K\in\mathcal{T}_h}
  \left(h_K^{-2}\|u-u_I\|_{L^2(K)}^2+|u-u_I|_{H^1(K)}^2\right).
 \end{equation*}
On the other hand, for
 $e\in\mathcal{E}_{h,0}\cup\mathcal{E}_{h,\Gamma}$, $e=\partial
 K_1\cap\partial K_2$,
\[
 \|\hat{u}-\hat{u}_I\|_{L^2(e)}^2
\le  C
 \left(\|u|_{K_1}-{u}_{I,K_1}\|_{L^2(e)}^2+\|u|_{K_2}-{u}_{I,K_2}\|_{L^2(e)}^2\right)
\]
Therefore, as above, we have  
  \begin{equation*}
   \sum_{K\in\mathcal{T}_h}\sum_{e\subset\partial K}
   {\frac{\eta_e}{h_e}}
 \left\|\hat{u}-\hat{u}_I\right\|_{L^2(e)}^2
 \le C\sum_{K\in\mathcal{T}_h}
  \left(h_K^{-2}\|u-u_I\|_{L^2(K)}^2+|u-u_I|_{H^1(K)}^2\right).
 \end{equation*}
Consequently, we obtain
 \begin{multline*}
\|\bm{u}-\bm{w}_h\|_{1+s,h}^2  \\ \le C\sum_{K\in\mathcal{T}_h}
  \left(h_K^{-2}\|u-u_I\|_{L^2(K)}^2+|u-u_I|_{H^1(K)}^2+h_K^{2s}|u-u_I|_{H^{1+s}(K)}^2\right).  
 \end{multline*}

At this stage, we recall
 \[
  |u-u_I|_{H^{t}(K)}\le Ch_K^{s+1-t}|u|_{H^{1+s}(K)}\quad (0\le t\le
 2),
 \]
 where $|\cdot|_{H^0(K)}$ is understood as $\|\cdot\|_{L^2(K)}$. See,
 for example, \cite[Theorems 2.19, 2.22]{fei89} where the case of
 integer $t$ is explicitly mentioned. However, the extension to the case of
 non-integer $t\in [0,1+s]$ is straightforward, since the imbedding
 $H^{t}(K)\subset H^{1+s}(K)$ is continuous. Combining those inequalities, we deduce 
 \[
  \|\bm{u}-\bm{w}_h\|_{1+s,h}\le Ch^s\left(\|u\|_{H^{1+s}(\Omega_1)}+\|u\|_{H^{1+s}(\Omega_2)}\right),
 \]
 which completes the proof. 
\end{proof}

\begin{thm}
 \label{th:3}
 Under the same assumptions of Theorem \ref{th:1}, we have  
  \begin{equation*}
   \|u-u_h\|_{L^2(\Omega)}\le
Ch^{2s}\left(\|u\|_{H^{1+s}(\Omega_1)}+\|u\|_{H^{1+s}(\Omega_2)}\right).
  \end{equation*}
\end{thm}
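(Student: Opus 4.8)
The plan is to run the Aubin--Nitsche duality argument, exploiting that $B_h$ is symmetric and that the Galerkin orthogonality \eqref{eq:28} holds. Write $\bm{e}=\bm{u}-\bm{u}_h=(e,\hat{e})$ with $e=u-u_h$ and $\hat{e}=\hat{u}-\hat{u}_h$, so that $\bm{e}\in\bm{V}^{1+s}(h)$. First I introduce the dual problem: find $\psi$ solving $-\nabla\cdot A\nabla\psi=e$ in $\Omega\backslash\Gamma$, $\psi=0$ on $\partial\Omega$, with homogeneous interface data $\psi|_{\Omega_1}-\psi|_{\Omega_2}=0$ and $(A\nabla\psi)|_{\Omega_1}\cdot n_1+(A\nabla\psi)|_{\Omega_2}\cdot n_2=0$ on $\Gamma$. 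Because $A$ is symmetric, this is again a problem of the form \eqref{eq:1}, now with right-hand side $e\in L^2(\Omega)$ and with $g_D=0$, $g_N=0$; I invoke for it the same elliptic regularity \eqref{eq:reg} (the exponent $s$ is fixed by the geometry, cf.\ Remark \ref{rem:reg1}, and is unaffected by the data), which yields $\psi_i\in H^{1+s}(\Omega_i)$ together with $N_s(\psi)\le C\|e\|_{L^2(\Omega)}$. This regularity step is the crux of the proof: it is exactly what gives the second factor of $h^s$ and hence the doubling to $h^{2s}$.

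Next I package $\bm{\psi}=(\psi,\hat\psi)\in\bm{V}^{1+s}(h)$, with $\hat\psi$ defined precisely as $\hat{u}$ in Lemma \ref{la:3}. The key algebraic observation is that $\psi-\hat\psi\equiv 0$ on $S_h$: across interior edges the two element traces of $\psi$ coincide (as $\psi|_{\Omega_i}\in H^1(\Omega_i)$) and equal their average $\hat\psi$; across interface edges the homogeneous jump condition $g_D=0$ again forces the two traces to agree; and on $\partial\Omega$ both $\psi$ and $\hat\psi$ vanish. Testing the local conservation identity \eqref{eq:6} (valid for the dual solution) with $(v,\hat{v})=\bm{e}$ and using $g_N=0$ gives $\sum_{K}\int_K A\nabla\psi\cdot\nabla e\,dx+\sum_{K}\int_{\partial K}(A\nabla\psi\cdot n_K)(\hat{e}-e)\,dS=\|e\|_{L^2(\Omega)}^2$. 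Since $\psi-\hat\psi\equiv 0$, the two remaining terms $B_3,B_4$ in $B_h(\bm{\psi},\bm{e})$ vanish, so the left-hand side is precisely $B_h(\bm{\psi},\bm{e})$; thus $B_h(\bm{\psi},\bm{e})=\|u-u_h\|_{L^2(\Omega)}^2$.

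The form $B_h$ is symmetric ($B_2$ and $B_3$ interchange under swapping the arguments while $B_1,B_4$ are symmetric), so the orthogonality \eqref{eq:28} also reads $B_h(\bm{v}_h,\bm{e})=0$ for all $\bm{v}_h\in\bm{V}_h$. Let $\bm{\psi}_I=(\psi_I,\hat\psi_I)\in\bm{V}_h$ be the interpolant of $\bm{\psi}$ constructed exactly as $\bm{w}_h$ in the proof of Theorem \ref{th:2}. Subtracting this admissible test function, $\|u-u_h\|_{L^2(\Omega)}^2=B_h(\bm{\psi}-\bm{\psi}_I,\bm{e})$, and the boundedness estimate \eqref{eq:bdd2} of Lemma \ref{la:7a} with $t=s$ bounds this by $C_{\mathrm{b},s,s}\,\|\bm{\psi}-\bm{\psi}_I\|_{1+s,h}\,\|\bm{e}\|_{1+s,h}$.

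It remains to estimate the two factors. The interpolation bound derived inside the proof of Theorem \ref{th:2} applies verbatim to $\psi$ and gives $\|\bm{\psi}-\bm{\psi}_I\|_{1+s,h}\le Ch^s N_s(\psi)\le Ch^s\|u-u_h\|_{L^2(\Omega)}$ by the dual regularity above, while Theorem \ref{th:2} itself gives $\|\bm{e}\|_{1+s,h}\le Ch^s(\|u\|_{H^{1+s}(\Omega_1)}+\|u\|_{H^{1+s}(\Omega_2)})$. Substituting both and cancelling one power of $\|u-u_h\|_{L^2(\Omega)}$ (the estimate being trivial when the error is zero) yields the claimed $h^{2s}$ bound. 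Apart from the dual-regularity hypothesis, every step is routine once the identity $\psi-\hat\psi\equiv 0$ has been noted.
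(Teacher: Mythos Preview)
Your argument is correct and follows the same Aubin--Nitsche structure as the paper's proof: set up the homogeneous dual problem, invoke the regularity \eqref{eq:reg} for $\psi$, use consistency to get $\|e\|_{L^2}^2=B_h(\bm{e},\bm{\psi})$, subtract a discrete approximant of $\bm{\psi}$ via Galerkin orthogonality, and bound with \eqref{eq:bdd2} and Theorem~\ref{th:2}. The only (inessential) difference is that the paper subtracts the HDG solution $\bm{\psi}_h$ of the discrete dual problem and then applies Theorem~\ref{th:2} to $\bm{\psi}-\bm{\psi}_h$, whereas you subtract the interpolant $\bm{\psi}_I$ and appeal directly to the interpolation estimate inside that proof; your explicit remark that $\psi-\hat\psi\equiv 0$ (making $B_3=B_4=0$) is exactly what the paper uses implicitly when it cites Lemma~\ref{la:2}.
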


 \begin{proof}
  We follow the Aubin--Nitsche duality argument. 
 Set $\bm{e}_h=\bm{u}-\bm{u}_h\in \bm{V}_h$ with ${e}_h=u-u_h\in V_h$, $\hat{e}_h=\hat{u}-\hat{u}_h\in \hat{V}_h$ and consider the adjoint problem: Find $\psi\in
 V$ such that 
\begin{equation} 
\label{eq:71}
 \gamma_1\psi_1 - \gamma_2\psi_2=0\mbox{ on }\Gamma, \qquad
 a(v,\psi)=\int_\Omega ve_h~dx\quad (\forall v\in V).
\end{equation}
 (Note that we have taken $f=e_h$, $g_D=0$, $g_N=0$ and used the symmetry of $a$.)
In view of \eqref{eq:reg}, we have $\psi_1\in H^{1+s}(\Omega_1)$, $\psi_2\in H^{1+s}(\Omega_2)$ and
   \begin{equation}
    \label{eq:72}
N_s(\psi)\le C\|e_h\|_{L^2(\Omega)}.
   \end{equation}
  As is verified in Lemma \ref{la:2}, $\bm{\psi}=(\psi,\hat{\psi})\in
  \bm{V}^{1+s}(h)$ satisfies
\begin{equation*}
 B_h(\bm{v},\bm{\psi})=\int_\Omega ve_h~dx\quad (\forall \bm{v}\in\bm{V}(h)).
\end{equation*}
HDG scheme for \eqref{eq:71} reads as follows: Find $\bm{\psi}_h\in\bm{V}_h$ such that 
  \begin{equation*}
 B_h(\bm{v}_h,\bm{\psi}_h)=\int_\Omega ve_h~dx\quad (\forall \bm{v}_h\in\bm{V}_h).
\end{equation*}
Then, we have 
  \begin{align*}
   \|e_h\|_{L^2(\Omega)}^2
   &=B_h(\bm{e}_h,\bm{\psi}) =B_h(\bm{e}_h,\bm{\psi}-\bm{\psi}_h) &(\mbox{by \eqref{eq:28}})\\
   &\le C\|\bm{e}_h\|_{1+s,h}\|\bm{\psi}-\bm{\psi}_h\|_{1+s,h} &(\mbox{by \eqref{eq:bdd2}})\\
   &\le Ch^sN_s(u)\cdot h^sN_s(\psi) &(\mbox{by \eqref{eq:40}})\\
   &\le Ch^{2s}N_s(u)\cdot \|e_h\|_{L^2(\Omega)}, &(\mbox{by \eqref{eq:72}})
  \end{align*}
 which completes the proof. 
 \end{proof}

 \section{Numerical examples}
 \label{sec:9}

 In this section, we confirm the validity of error estimates described
 in Theorems \ref{th:2} and \ref{th:3} using simple
 numerical examples.
 
\begin{ex}
 \label{ex:1}
Set $\Omega_1=(0,1)\times (0,1/2)$, $\Omega_2=(0,1)\times (1/2,1)$ and 
 consider
 \begin{subequations}
  \label{eq:35}
\begin{gather}
 A=\lambda I,\qquad
 \lambda=
  \begin{cases}
   4 & \mbox{ in }\Omega_1\\
   1 & \mbox{ in }\Omega_2,
  \end{cases}
 \quad (I\mbox{: the identity matrix}),\\
 f= \begin{cases}
   8\pi^2 \sin(\pi x_1)\sin(\pi x_2) & \mbox{ in }\Omega_1\\
   -2\pi^2 \sin(\pi x_1)\sin(\pi x_2) & \mbox{ in }\Omega_2.
  \end{cases}
\end{gather}
 \end{subequations}
 The exact solution is given as 
 \begin{equation*}
u= \begin{cases}
   \sin(\pi x_1)\sin(\pi x_2) & \mbox{ in }\Omega_1\\
   -\sin(\pi x_1)\sin(\pi x_2) & \mbox{ in }\Omega_2.
  \end{cases}
 \end{equation*}
 (Functions $g_D$ and $g_N$ are computed by $u$.)
 It is apparent that $u_1\in H^2(\Omega_1)$ and $u_2\in H^2(\Omega_2)$
 so that we are able to apply Theorems \ref{th:2} and \ref{th:3} for
 $s=1$.
 \end{ex}
 
\begin{figure}[htb]
	 \begin{center}
	  \includegraphics[width=.5\textwidth]{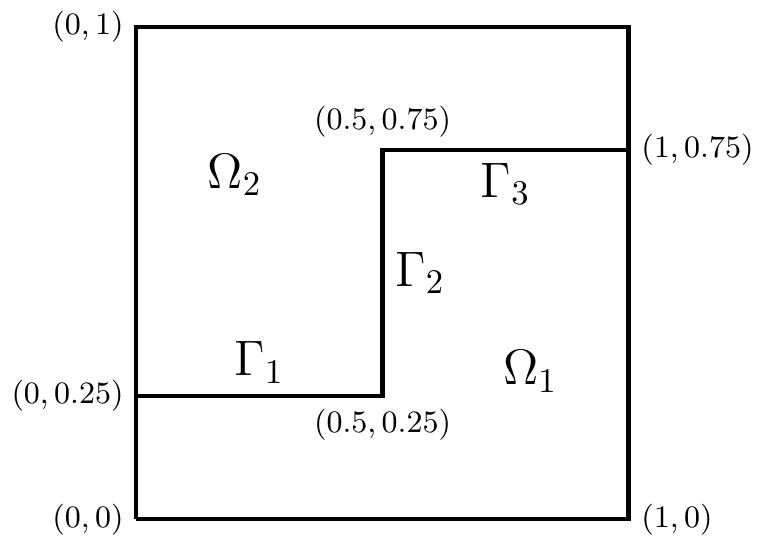}
	 \end{center}
   \caption{Examples of $\Omega_1$, $\Omega_2$ and $\Gamma$.}
\label{f:11}
\end{figure}

\begin{ex}
 \label{ex:2}
$\Omega$, $\Omega_1$ and $\Omega_2$ are given as shown for
 illustration in Fig.~\ref{f:11}. $\Gamma$ is set as
 $\Gamma=\Gamma_1\cup\Gamma_2\cup\Gamma_3$. We use $A$ and $f$
 defined as \eqref{eq:35}. Functions $g_D$ and $g_N$ are given as
 \[
 g_D=
  \begin{cases}
2\sin(\pi x_1) & \mbox{ on }\Gamma_1\\
2& \mbox{ on }\Gamma_2\\
2\sin(\pi x_1) & \mbox{ on }\Gamma_3,
  \end{cases}
 \qquad 
  g_N=0\mbox{ on }\Gamma.
 \]
In this case, we have $u_1\in H^{1+s}(\Omega_1)$ and $u_2\in
 H^{1+s}(\Omega_2)$ for some $s\in (1/2,1)$, since $\Omega_1$ and
 $\Omega_2$ have concave corners.   
\end{ex}

 We use the $Q_1$ element for $V_h$ on uniform rectangular meshes and
 $P_1$ for $\hat{V}_h$ (see Remark \ref{rem:17}). Set
 \begin{equation}
E_h=|u-u_h|_{H^1(\mathcal{T}_h)},\qquad 
e_h=\|u-u_h\|_{L^2(\Omega)}. 
  \label{eq:33}
 \end{equation}
For Example \ref{ex:2}, we use numerical solutions $u_{h'}$ with extra
fine mesh $h'$ instead of the exact solution $u$. 
 We examine $E_h$ and $e_h$ together with
\[
  R_h=\frac{\log E_h-\log E_{h/2}}{\log 2},\qquad 
  r_h=\frac{\log e_h-\log e_{h/2}}{\log 2}
\]
for several $h$'s.

Results are reported in Tab. \ref{tab:1} and \ref{tab:11} for Examples
\ref{ex:1} and \ref{ex:2}, respectively.  
We observe from those tables theoretical convergences with $s=1$ and
$s\in (1/2,1)$, respectively, for Examples \ref{ex:1} and \ref{ex:2} actually take place.

\begin{table}
\begin{center}
\begin{tabular}{c|cc|cc}
\hline
$h$ & $E_h$ & $R_h$ & $e_h$ & $r_h$ \\
\hline
\hline
0.06250 & 1.62$\cdot 10^{-1}$ &      &4.14$\cdot 10^{-3}$ &\\
0.03125 & 8.13$\cdot 10^{-2}$ &1.00 &1.04$\cdot 10^{-3}$ &1.99\\
0.01563 & 4.06$\cdot 10^{-2}$ &1.00  &2.60$\cdot 10^{-4}$ &2.00\\
0.00781 & 2.04$\cdot 10^{-2}$ &0.99 &6.50$\cdot 10^{-5}$ &2.00\\
0.00391 & 1.02$\cdot 10^{-2}$ &1.00  &1.63$\cdot 10^{-5}$ &2.00\\
0.00195 & 5.08$\cdot 10^{-3}$ &1.01  &4.09$\cdot 10^{-6}$ &1.99\\
 \hline
\end{tabular}
 \end{center}
\caption{Errors and convergence rates for Example \ref{ex:1}.}
\label{tab:1}
\end{table}


\begin{table}
 \begin{center}
\begin{tabular}{c|cc|cc}
\hline
$h$ & $E_h$ & $R_h$& $e_h$ & $r_h$ \\
\hline
\hline
0.06250 & 1.49$\cdot 10^{-1}$&	     &  1.37$\cdot 10^{-3}$&	\\
0.03125 & 7.60$\cdot 10^{-2}$&	0.98&	3.48$\cdot 10^{-4}$&	1.98\\
0.01563 & 3.88$\cdot 10^{-2}$&	0.97&	8.83$\cdot 10^{-5}$&	1.98\\
0.007813& 1.99$\cdot 10^{-2}$&	0.97&	2.26$\cdot 10^{-5}$&	1.97\\
0.003906& 1.02$\cdot 10^{-2}$&	0.96&	5.86$\cdot 10^{-6}$&	1.95\\
 \hline
\end{tabular}
 \end{center}
\caption{Errors and convergence rates for Example \ref{ex:2}.}
\label{tab:11}
\end{table}

\section*{Acknowledgement}
NS is supported by 
JSPS KAKENHI Grant Number 15H03635, 15K13454.

\bibliographystyle{plain}
\bibliography{ms17-1} 

\appendix 
\section{Proof of Lemma \ref{la:11a}}
\label{sec:ap}

Let $s\in (1/2,1)$. 
Let $K\in \mathcal{T}_h$ and $e\subset \partial K$.

The fractional order Sobolev space $H^s(K)$ is defined as  
\begin{equation*}
  H^{s}(K)=\{v\in L^2(K)\mid \|v\|_{H^{s}(K)}^2=\|v\|_{L^2(K)}^2+|v|_{H^{s}(K)}^2<\infty\},
\end{equation*}
where 
\begin{equation*}
|v|_{H^s(K)}^2=\int \hspace{-2mm}
 \int_{K\times K}\frac{~|v(x)-v(y)|^2}{|x-y|^{d+2s}}~dxdy. 
\end{equation*}

It suffices to prove
\begin{equation}
	  \|v\|_{L^2(e)}^2 \le
	  C_{s,\mathrm{T}}h_e^{-1}\left(\|v\|_{L^2(K)}^2+h_K^{2s}|v|_{H^s(K)}^2\right)\qquad
	  (v\in
 H^s(K)), \label{eq:tr1a} 
\end{equation}
since the desired inequality \eqref{eq:tr2a} is a direct consequence of \eqref{eq:tr1a}.

Suppose that $\tilde{K}$ is the reference element in $\mathbb{R}^d$ with
$\operatorname{diam}(\tilde{K})=1$. Moreover, let $\tilde{e}\subset\partial\tilde{K}$ be
a face ($d=3$)/edge ($d=2$) of $\tilde{K}$. Trace theorem implies 
\begin{equation*}
\|\tilde{v}\|_{L^2(e)}^2\le
\tilde{C}\left(\|\tilde{v}\|_{L^2(\tilde{K})}^2+|\tilde{v}|_{H^{s}(\tilde{K})}^2\right)\qquad (\tilde{v}\in H^1(\tilde{K})),
\end{equation*}
where $\tilde{C}$ denotes an
absolute positive constant. See \cite[Theorem
1, \S V.1.1]{jw84} for example.

Suppose that $\Phi(\xi)=B\xi+c$, $B\in\mathbb{R}^{d\times
d}$, $c\in\mathbb{R}^d$, is the affine mapping which maps $\tilde{K}$ onto
$K$; $K=\Phi(\tilde{K})$. We know
\begin{equation*}
  \|B\|=\sup_{|\xi|=1}|B\xi|\le\frac{h_K}{\tilde{\rho}},\quad 
  \|B^{-1}\|\le \frac{\tilde{h}}{\rho_K},\quad
  d\xi=\frac{\operatorname{meas}_{d}(\tilde{K})}{\operatorname{meas}_{d}({K})} dx,
\end{equation*}
where $\tilde{h}={h}_{\tilde{K}}$, $\tilde{\rho}={\rho}_{\tilde{K}}$ and
$\operatorname{meas}_d(K)$ denotes the $\mathbb{R}^d$-Lebesgue measure
of $K$. Moreover,
\begin{equation*}
\frac{|x|}{|B^{-1}x|}\le \sup_{\xi\in\mathbb{R}^d}
\frac{|B\xi|}{|\xi|}=\|B\|\qquad (x\in\mathbb{R}^d,x\ne 0).
\end{equation*}
We recall that there exists a positive constant $\nu_2$ that independent
of $h$ such that $h_K/\rho_K\le \nu_2$ ($\forall
K\in\forall\mathcal{T}_h\in \{\mathcal{T}_h\}_h$) by the
shape-regularity of the family of triangulations. 

Now we can state the proof of \eqref{eq:tr1a}.  
By the density, it suffices to consider \eqref{eq:tr1a} for $v\in C^1(K)$. Set $\tilde{v}=v\circ\Phi\in
C^1(\tilde{K})$. Then,
\begin{equation*}
 \int_{\tilde{K}}\tilde{v}^2d\xi
 =
 \frac{\operatorname{meas}_{d}(\tilde{K})}{\operatorname{meas}_{d}({K})}
 \int_{{K}}{v}^2dx
 \le C \rho_K^{-d}\|v\|_{L^2(K)}^2
\end{equation*}
and 
\begin{align*}
\int\hspace{-2mm}\int_{\tilde{K}\times\tilde{K}}\frac{~|\tilde{v}(\xi)-\tilde{v}(\eta)|^2}{|\xi-\eta|^{d+2s}}~d\xi
 d\eta
 &\le
 \left(\frac{\operatorname{meas}_{d}(\tilde{K})}{\operatorname{meas}_{d}({K})}\right)^2 
\int\hspace{-2mm}\int_{{K}\times{K}}\frac{~|v(x)-v(y)|^2}{~|B^{-1}x-B^{-1}y|^{d+2s}}~dxdy \\
&\le
 C\rho_K^{-2d} 
\cdot \|B\|^{d+2s}\int\hspace{-2mm}\int_{{K}\times{K}}\frac{~|v(x)-v(y)|^2}{~|x-y|^{d+2s}}~dxdy\\
&\le
 Ch_K^{2s}\nu_2^d\rho_K^{-d} 
 \int\hspace{-2mm}\int_{{K}\times{K}}\frac{~|v(x)-v(y)|^2}{~|x-y|^{d+2s}}~dxdy.
\end{align*}
Using those inequalities, we have 
\begin{align*}
 \|\tilde{v}\|_{L^2(e)}^2
 &=
 \frac{\operatorname{meas}_{d-1}(e)}{\operatorname{meas}_{d-1}(\tilde{e})}
 \int_{\tilde{e}}\tilde{v}(\xi)^2~d\xi\\
 &\le Ch_e^{d-1}\cdot 
 \tilde{C}\left(\int_{\tilde{K}}\tilde{v}^2d\xi+
\int\hspace{-2mm}\int_{\tilde{K}\times\tilde{K}}\frac{~|v(\xi)-v(\eta)|^2}{|\xi-\eta|^{d+2s}}~d\xi
 d\eta\right).\\
 &\le C\nu_1^dh_e^{-1} 
 \left(\|v\|_{L^2(K)}^2+h_K^{2s}|v|_{H^s(K)}^2 \right),
\end{align*}
which completes the proof. 
%

\end{document}